\documentclass[10pt,reqno]{amsart}

\usepackage[english]{babel}

\usepackage[margin=24mm]{geometry}
\usepackage[pagebackref,colorlinks,urlcolor=blue,citecolor=blue,linkcolor=blue]{hyperref}






\usepackage{amsmath}
\usepackage{amsfonts}
\usepackage{amsthm}
\usepackage{amssymb}

\usepackage[all]{xy}
\usepackage{verbatim}
\usepackage{a4wide}

\usepackage{hyperref}

\usepackage{enumerate} 
\usepackage{mathtools}
\numberwithin{equation}{section}
\usepackage{caption}
\usepackage{graphicx}
\graphicspath{ {./image/} }
\usepackage{tikz-cd}
\usepackage{xcolor}
\usepackage{difftrees}

\usepackage{shuffle}
\usepackage{comment}


\theoremstyle{definition}
\newtheorem{definition}{Definition}[section]
\newtheorem{example/}[definition]{Example}

\theoremstyle{plain}
\newtheorem{theorem}[definition]{Theorem}
\newtheorem{proposition}[definition]{Proposition}
\newtheorem{cor}[definition]{Corollary}
\newtheorem{lemma}[definition]{Lemma}

\theoremstyle{remark}
\newtheorem{remark}[definition]{Remark}




\usepackage[parfill]{parskip}    
\usepackage{graphicx}
\usepackage{amssymb}
\usepackage{epstopdf}
\usepackage{pdfpages}
\usepackage{hyperref}
\usepackage{url}
\usepackage{tikz-cd}
\DeclareGraphicsRule{.tif}{png}{.png}{`convert #1 `dirname #1`/`basename #1 .tif`.png}

\newtheorem{note}[definition]{Note}

 \usepackage{stmaryrd}

    \newcommand*{\LTS}{\operatorname{LTS}}
        \newcommand*{\LAT}{\operatorname{LAT}}

\newcommand*{\Alg}{\operatorname{Alg}}
\newcommand*{\PLY}{\operatorname{PLY}}
\newcommand{\ord}[1]{\overrightarrow{#1}}
\newcommand*{\OT}{\ord{\operatorname{Tree}}}

\newcommand*{\T}{{\mathcal{T}}}

\newcommand{\I}{\mathcal{I}}
\newcommand{\noarg}{\,\_\,}
\newcommand{\OF}{\operatorname{OF}}

\title{The Connection Algebra of Reductive Homogeneous Spaces}
\author{Jonatan Stava}

\address{University of Bergen, Department of Mathematics, P.O.~Box 7803, 5020 Bergen, Norway}
\email{jonatan.stava@uib.no}

\subjclass[2020]{53C05, 17D99, 53C30, 05C05}

\keywords{Connection, reductive homogeneous space, connection algebra, Lie-Yamaguti algebra, rooted tree}


\setlength{\marginparwidth }{2cm}
\begin{document}
\maketitle

\begin{abstract}

Consider the smooth sections of the tangent bundle of a reductive homogeneous space. This is a vector space over the field of real numbers. The canonical connection acts as a linear binary operator on this vector space, making it an algebra. If we include another binary operator defined as the negative of the torsion, the resulting algebraic structure is a post-Lie-Yamaguti algebra. This structure is closely related to Lie-Yamaguti algebras.
\end{abstract}


\section{Introduction}

The vector fields on a smooth manifold together with an affine connection form an algebraic structure we call the connection algebra. Understanding this structure has proven to be an important tool for doing numerical integration on manifolds. In the case of a locally flat and torsion free space, this algebra forms a pre-Lie algebra, an algebra first described by Gerstenhaber \cite{Gerstenhaber_63} and by Vinberg \cite{Vinberg}. It has later been shown to be closely related to B-series \cite{B-series_CHV_2010,CALAQUE_2011}, which is a vital tool for numerical integration \cite{HLW_2006_Book}. 

On any Lie group there exists a connection called the (-)-connection, which was one of the connections studied by Cartan in \cite{Cartan1927}. This connection has trivial curvature and parallel torsion, and in fact any manifold with a connection with these qualities can be locally represented as a Lie group with its (-)-connection \cite{Nomizu_54}. The connection algebra of a Lie group with its (-)-connection is a post-Lie algebra, a rich algebraic structure defined by Vallette in \cite{BV_2007} by means of operads. Post-Lie algebras also appear in the D-algebra described in \cite{MK_Wright_08} and have since become a key tool for numerical integration on Lie groups \cite{MK_Lundervold_2013,KMKL_15,CEFMK19}. Later research has also linked post-Lie algebras to geometries beyond that of a Lie group \cite{MK_Stern_Verdier_20,Gavrilov_paper_2022,grong2023postlie}. A smooth manifold can be represented locally as a symmetric space if and only if there exist a torsion free connection with parallel curvature on that manifold \cite{Nomizu_54}. The connection algebra on a symmetric space equipped with this connection is a Lie admissible triple algebra, a structure newly described in \cite{LAT_MKS_2023} that coincides with the notion of a G-algebra which was defined in \cite{GS_00,Sokolov_17} in a different context.

Building on the success of post-Lie and pre-Lie algebras, this article aims to provide a complete overview of the connection algebras related to reductive homogeneous spaces. Locally flat and torsion free spaces, Lie groups and symmetric spaces are all special cases of reductive homogeneous spaces. In his 1954 article \cite{Nomizu_54}, Nomizu provided a canonical connection for any reductive homogeneous space. Using this connection with the geometric constraints that follows, we describe the resulting algebra which we call a \textit{post-Lie-Yamaguti algebra} (PLY). This is a type of algebra that naturally generalizes both post-Lie algebras and Lie admissible triple algebras. The main result of this article is a description of the free PLY algebra, which can be found in Theorem \ref{th:main}.

In 1958 Yamaguti described an algebraic structure associated to reductive homogeneous spaces \cite{Yamaguti1958}. These algebras were later named Lie-Yamaguti algebras \cite{Kinyon_Weinstein_2001}. Post-Lie-Yamaguti algebras relate to Lie-Yamaguti algebras similar to how post-Lie algebras relate to Lie algebras and how Lie admissible triple algebras relate to Lie triple systems.

This paper is organized as follows: In Section \ref{sec:1} we introduce the concept of connection algebra. In Section \ref{sec:1.2} we review the types of connection algebras that have already been described in the literature. In Section \ref{sec:2} we present the connection algebra associated to reductive homogeneous spaces. Section \ref{sec:3} and \ref{sec:4} discusses the free algebras over one and two binary operators respectively, before a basis for the free post-Lie-Yamaguti algebra is presented in Section \ref{sec:5}. In Section \ref{sec:6} we look at Lie-Yamaguti algebras that are closely related to post-Lie-Yamaguti algebras.

\section{Connections with Parallel Curvature and Torsion}\label{sec:1}

Let $M$ be a real smooth manifold and denote by $\frak{X}_M = \Gamma(TM)$ the vector space of smooth sections of the tangent bundle, or the smooth vector fields. An affine connection $\nabla$ on $M$ is an $\mathbb{R}$-bilinear map $\nabla: \frak{X}_M \times \frak{X}_M \rightarrow \frak{X}_M$ satisfying
\[
\begin{split}
    \nabla_{fX}Y &= f\nabla_X Y, \\
    \nabla_X fY &= (Xf)Y + f \nabla_X Y,
\end{split}
\]
for all vector fields $X,Y\in \frak{X}_M$ and all smooth real valued functions $f\in C^{\infty}(M,\mathbb{R})$. It follows from this definition that the vector space $\frak{X}_M$ together with the $\mathbb{R}$-bilinear product $\nabla$ is an algebra which we call the \textit{connection algebra} of $(M,\nabla)$. To every connection on $M$ one can associate its curvature $R$ and its torsion $T$ defined by
\begin{equation}\label{eq:R}
    R(X,Y)Z = \nabla_X \nabla_Y Z - \nabla_Y \nabla_X Z - \nabla_{[X,Y]_J} Z,
\end{equation}
and
\begin{equation}\label{eq:T}
    T(X,Y) = \nabla_X Y - \nabla_Y X - [X,Y]_J,
\end{equation}
where $[\cdot , \cdot]_J$ is the Jacobi-Lie bracket on vector fields. We say that the curvature and the torsion is parallel respectively if $\nabla R=0$ and $\nabla T = 0$, that is if
\begin{equation}\label{eq:nablaR=0}
    \nabla_W (R(X,Y)Z) = R(\nabla_W X,Y)Z + R(X,\nabla_W Y) Z + R(X,Y) (\nabla_W Z),
\end{equation}
and
\begin{equation}\label{eq:nablaT=0}
    \nabla_W (T(X,Y)) = T(\nabla_W X, Y) + T(X, \nabla_W Y). 
\end{equation}
When dealing with a connection with parallel curvature and torsion it might be the case that the curvature satisfy the stronger condition $R=0$. The same goes for the torsion, hence we may differentiate between four types of connections with parallel curvature and torsion. This classification shown in the chart below was done by Nomizu in 1954 \cite{Nomizu_54}.
\begin{center}
\vspace{0.2in}{\color{white}pp}
    \begin{tabular}{|l|c|c|} 
    \hline
        & $R=0$ & $\nabla R =0$ \\ 
    \hline
        $T=0$ & flat and torsion free space & locally symmetric space\\ 
    \hline
        $\nabla T=0$ & locally Lie group & locally reductive homogeneous space\\ 
    \hline
    \end{tabular}
    \vspace{0.2in}{\color{white}pp}
\end{center}

\begin{remark}\label{rmk:CanCon}
    Nomizu proved that on any reductive homogeneous space there exists a unique connection satisfying certain properties which he called \textit{the canonical affine connection of the second kind}, see \cite[Theorem 10.2]{Nomizu_54}. Whenever we consider a reductive homogeneous space we will simply call this connection \textit{the canonical connection}. He proceed to show that this connection has parallel curvature and torsion, and that any manifold with such a connection can be locally represented as a reductive homogeneous space \cite[Theorem 18.1]{Nomizu_54}. A symmetric space is a special case of a reductive homogeneous space, and the canonical connection in this case is torsion free. Nomizu showed that any manifold with a connection satisfying $T=0$ and $\nabla R=0$ can be locally represented as a symmetric space \cite[Theorem 17.1]{Nomizu_54}. 
    There are several ways of representing a Lie group G as a reductive homogeneous space. One way is to consider $G \cong G/\{e\}$ where $G$ acts on $M=G/\{e\}$ from the left. In this case the canonical connection is known as the (-)-connection. If we let $G$ act from the right, we get the (+)-connection. In both of these cases the canonical connection is flat and has parallel torsion. The last option is to consider $G\cong G\times G /diag(G)$ where the diagonal $diag(G) \subset G\times G$ is defined as all the elements on the form $(g,g) \in G \times G$. This representation of $G$ is in fact a symmetric space, and the associated canonical connection is torsion free with parallel curvature. This is called the (0)-connection. These connections on Lie groups where studied by Cartan in \cite{Cartan1927}, but the information above can also be found in \cite{Nomizu_54,REGGIANI201326}. Any manifold with a connection satisfying $R=0$ and $\nabla T=0$ can be locally represented as a Lie group with the (-)-connection, and whenever we consider a Lie group we will call this connection \textit{the canonical connection}. In this paper we will always use the canonical connection associated to a manifold when we consider the connection algebra. 
\end{remark}

We conclude this section with the Bianchi identities which are two relations which hold for any manifold with a connection:
\begin{equation}\tag{1B}\label{eq:1Bianchi}
    \sum_{\circlearrowleft (X,Y,Z)} T(T(X,Y),Z) + (\nabla_X T)(Y,Z) - R(X,Y)Z =0, 
\end{equation}
\begin{equation}\tag{2B}\label{eq:2Bianchi}
    \sum_{\circlearrowleft (X,Y,Z)} (\nabla_X R)(Y,Z) - R(X,T(Y,Z)) =0.  
\end{equation}

The notation $\sum_{\circlearrowleft (X,Y,Z)}$ means the sum over all cyclic permutations of the arguments $X,Y,Z$. 

\section{Connection Algebras}\label{sec:1.2}

Consider a vector space $\mathcal{A}$ over the field of real numbers together with a bilinear operator $\rhd$. Then $(\mathcal{A}, \rhd)$ is an algebra. Define the associator $a_{\rhd}(x,y,z) := x \rhd (y \rhd z) - (x \rhd y) \rhd z$ and define the triple-bracket by
\begin{equation}\label{eq:TB}
    [x,y,z] := a_{\rhd}(x,y,z) - a_{\rhd}(y,x,z)
\end{equation}
for $x,y,z \in \mathcal{A}$. 

\begin{remark}
Let $(\frak{X}_M , \nabla)$ be the connection algebra of a smooth manifold $M$. Then, if we let $X\rhd Y := \nabla_X Y$, we get
\begin{equation}\label{eq:TB-RicciId}
    R(X,Y) Z - \nabla_{T(X,Y)} Z = [X,Y,Z].
\end{equation}
This equation is sometimes referred to as Ricci's identity \cite{Ricci_1900} or Ricci's formula \cite{Besse_1987}.
\end{remark}

\begin{note}\label{note:notation}
We give a short overview of some of the notations used in this paper:
\begin{itemize}
    \item When considering a connection algebra we will always let $\nabla = \rhd$, that is $\nabla_X Y = X \rhd Y$. 
    \item We will use capital letters to represent vector fields, $\{ X,Y,Z,W \} \subset \frak{X}_M$, and lower case letters to represent elements of an abstract algebra, $\{x,y,z,w \} \subset \mathcal{A}$.
    \item The notation $[\noarg,\noarg,\noarg]$ will always be used for the triple-bracket defined in \eqref{eq:TB}, while the notation $[\![ \noarg,\noarg,\noarg]\!]$ will be used for other general ternary operators.
    \item The notation $[ \noarg, \noarg]$ is reserved for binary operations that define a Lie algebra, while $[\![ \noarg, \noarg]\!]$ will be used for other binary operations when it is useful to compare to a Lie bracket.
\end{itemize}
\end{note}

\subsection{Pre-Lie Algebra}

\begin{definition}[pre-Lie]
An algebra $\mathcal{A}$ is called a \textit{pre-Lie algebra} if for any $x,y,z \in \mathcal{A}$ it satisfies $[x,y,z] = 0$.
\end{definition}

Consider a locally flat and torsion free space $(M,\nabla)$, that is; $R=0$ and $T=0$. From (\ref{eq:TB-RicciId}) we immediately see that $(\frak{X}_M, \nabla)$ is a pre-Lie algebra. Pre-Lie algebras where first described by Gerstenhaber \cite{Gerstenhaber_63} and by Vinberg \cite{Vinberg}. The relation between these algebras and numerical integration on Euclidean spaces is explored in \cite{B-series_CHV_2010} and \cite{CALAQUE_2011}. The free pre-Lie algebra $\text{pre-Lie}(\mathcal{C})$ over a set $\mathcal{C}$ can be represented by non-planar rooted trees with the grafting product \cite{Chaoton_and_Livernet_01,Dzhumadildeav_and_Löfwall_02}. For instance when $\mathcal{C}$ consist of a single element we get the set of non-planar trees
\[
\text{Tree}(\{\ab \}) = \{ \ab, \aabb, \aaabbb, \aababb, \aaaabbbb, \aaababbb, \aaabbabb = \aabaabbb , \aabababb , \ldots \},
\]
and $\text{pre-Lie}(\mathcal{C}) = \text{span}_{\mathbb{R}}\{ \text{Tree}(\mathcal{C}) \}$. The product can be described as the sum over all terms one can get by grafting the root of the left factor onto a vertex of the right factor. We give an example for $\mathcal{C}= \{ \ab, \AB \}$:
\[
\AabB \rhd \aabb = \aAabBabb + \aaAabBbb.
\]
More information on the formalism of rooted trees will be given in Section \ref{sec:4}. 

\subsection{Post-Lie Algebra}

\begin{definition}[post-Lie]\label{def:post-Lie}
    An algebra $\mathcal{A}$ with product $\rhd$ and a bracket $[\noarg , \noarg]$ is called a \textit{post-Lie algebra} if $(\mathcal{A},[ \noarg, \noarg])$ is a Lie algebra and for all $x,y,z \in \mathcal{A}$ we have
    \begin{enumerate}[(i)]
        \item $x \rhd [y,z] = [x \rhd y, z] + [y, x\rhd z]$,
        \item $[x,y] \rhd z = [x,y,z]$.
    \end{enumerate}
\end{definition}

Let $M$ be locally a Lie group. Then we have a canonical connection satisfying $R=0$ and $\nabla T = 0$ (see Remark \ref{rmk:CanCon}). Define $[X,Y] = -T(X,Y)$, and notice that this bracket is skew symmetric. The first Bianchi identity (\ref{eq:1Bianchi}) gives exactly the Jacobi identity for this bracket, hence $(\frak{X}_M, [\noarg , \noarg ])$ is a Lie algebra. By exchanging the torsion in equation (\ref{eq:nablaT=0}) with this bracket we see that $(\frak{X}_M, [ \noarg , \noarg] , \nabla)$ satisfies the first post-Lie condition. The other follows from inserting $-[X,Y]$ for $T(X,Y)$ in equation (\ref{eq:TB-RicciId}), hence the connection algebra of a local Lie group is a post-Lie algebra.

The theory of post-Lie algebras was developed in conjunction with numerical integration on Lie groups \cite{MKK_03,MK_Wright_08,MK_Lundervold_2013} and has become a major player in this field \cite{KMKL_15,CEFMK19}. Later research has linked post-Lie algebras to geometries beyond Lie groups \cite{MK_Stern_Verdier_20,Gavrilov_paper_2022,grong2023postlie}. The free post-Lie algebra $\text{post-Lie}(\mathcal{C})$ is given by the free Lie algebra over the set of planar rooted trees colored by $\mathcal{C}$ \cite{BV_2007,MK_Lundervold_2013}, here with the left-grafting product. 

\subsection{Lie Admissible Triple Algebra}

\begin{definition}[Lie admissible]
    An algebra $\mathcal{A}$ with product $\rhd$ is \textit{Lie admissible} if it satisfies
    \begin{equation}\label{eq:LieAdm}
        [x,y,z] + [y,z,x] + [z,x,y] = 0
    \end{equation}
    for $x,y,z \in \mathcal{A}$, where $[\noarg , \noarg , \noarg]$ is defined by Equation \eqref{eq:TB}.
\end{definition}

An alternative definition of a Lie admissible algebra is an algebra $(\mathcal{A}, \rhd)$ such that the bracket defined by $[x,y] := x \rhd y - y \rhd x$ makes $(\mathcal{A} ,[\noarg, \noarg])$ into a Lie algebra. A Gröbner basis for the free Lie admissible algebra is given in \cite{Free_Lie-admissible_Grobner}.

\begin{proposition}\label{prop:LieAdm_eq_T=0}
    The connection algebra of a manifold $M$ with an affine connection $\nabla$ is Lie admissible if and only if $T=0$
\end{proposition}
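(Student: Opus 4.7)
My plan is to combine Ricci's identity \eqref{eq:TB-RicciId} with the first Bianchi identity \eqref{eq:1Bianchi}, exploiting the fact that $T$ controls the failure of $C^{\infty}(M,\mathbb{R})$-linearity of the triple bracket $[X,Y,Z]$ in its third slot. The ``if'' direction falls out immediately from Ricci and Bianchi once $T=0$; the ``only if'' direction uses a scaling trick in the last argument to extract $T$ from the Lie admissibility relation.

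For the ``if'' direction, assume $T=0$. Ricci's identity reduces to $[X,Y,Z]=R(X,Y)Z$, and the first Bianchi identity \eqref{eq:1Bianchi}, in which the $T$-terms as well as $\nabla T$ now vanish, collapses to $\sum_{\circlearrowleft(X,Y,Z)}R(X,Y)Z=0$. Cyclically summing Ricci therefore delivers exactly \eqref{eq:LieAdm}.

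For the converse, assume $(\frak{X}_M,\nabla)$ is Lie admissible. I would first establish, by a direct Leibniz computation together with the definition \eqref{eq:T} of $T$, the scaling rules
\[
[fX,Y,Z]=[X,fY,Z]=f[X,Y,Z], \qquad [X,Y,fZ]=f[X,Y,Z]-\bigl(T(X,Y)f\bigr)Z,
\]
for every $f\in C^{\infty}(M,\mathbb{R})$. The non-tensorial piece in the third slot arises because the Leibniz terms in $\nabla_X\nabla_Y(fZ)-\nabla_{\nabla_XY}(fZ)$, after the antisymmetrisation in $X,Y$ built into the triple bracket, combine to $[X,Y]_J f-(\nabla_XY-\nabla_YX)f=-T(X,Y)f$ via \eqref{eq:T}. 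Substituting $fZ$ for $Z$ in the Lie admissibility relation \eqref{eq:LieAdm} and applying the three scaling rules to the three cyclic occurrences gives
\[
0=\sum_{\circlearrowleft(X,Y,Z)}[X,Y,fZ]=f\sum_{\circlearrowleft(X,Y,Z)}[X,Y,Z]-\bigl(T(X,Y)f\bigr)Z=-\bigl(T(X,Y)f\bigr)Z.
\]
At any point $p\in M$, choosing a $Z$ with $Z(p)\neq 0$ and letting $f$ range over $C^{\infty}(M,\mathbb{R})$ forces $T(X,Y)(p)=0$, so $T\equiv 0$.

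The only delicate step is verifying the third-slot identity $[X,Y,fZ]=f[X,Y,Z]-(T(X,Y)f)Z$: the Leibniz rule generates several first-order terms that must be organised carefully so that, after antisymmetrising in $X,Y$, they reassemble into precisely $-T(X,Y)f$ through \eqref{eq:T}. Once this identity is in hand, the rest of the argument is purely formal.
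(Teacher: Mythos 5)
Your proof is correct and complete. Note that the paper itself does not prove this proposition; it simply cites \cite{MK_Stern_Verdier_20}, so your argument supplies a self-contained proof where the paper defers to the literature. The ``if'' direction is exactly the standard one: with $T=0$, Ricci's identity \eqref{eq:TB-RicciId} gives $[X,Y,Z]=R(X,Y)Z$ and the first Bianchi identity \eqref{eq:1Bianchi} collapses to $\sum_{\circlearrowleft}R(X,Y)Z=0$. The heart of your converse is the scaling identity in the third slot, which I have checked: expanding $\nabla_X\nabla_Y(fZ)-\nabla_{\nabla_XY}(fZ)$ and antisymmetrising in $X,Y$, the second-order terms $(Yf)\nabla_XZ+(Xf)\nabla_YZ$ cancel between the two associators, leaving $\bigl(X(Yf)-Y(Xf)\bigr)Z-\bigl((\nabla_XY-\nabla_YX)f\bigr)Z=-\bigl(T(X,Y)f\bigr)Z$ by \eqref{eq:T}, as you claim; tensoriality in the first two slots is an equally routine check. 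Substituting $fZ$ into \eqref{eq:LieAdm} then isolates $-\bigl(T(X,Y)f\bigr)Z=0$, and since a tangent vector annihilating all smooth functions vanishes, $T\equiv 0$. The one presentational quibble is that your displayed cyclic sum $\sum_{\circlearrowleft(X,Y,Z)}[X,Y,fZ]$ should be read as $[X,Y,fZ]+[Y,fZ,X]+[fZ,X,Y]$ (i.e.\ $fZ$ travels through all three slots), which is what your prose describes; written as displayed it could be misread as $f$ staying in the third slot. This tensoriality-defect argument is a clean and elementary way to see that Lie admissibility of the connection algebra is a purely pointwise condition detecting exactly the torsion.
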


\begin{proof}
    See \cite{MK_Stern_Verdier_20}.
\end{proof} 

If $M$ is any manifold with a connection $\nabla$, then there exist a torsion free connection $\nabla'$ on $M$ such that $\nabla$ and $\nabla'$ give raise to the same geodesics on $M$. This means that all smooth manifolds with an affine connection can be equipped with a Lie admissible connection algebra, which in turn makes Lie admissible connection algebras less interesting for the sort of classification we are interested in, that is: finding one-to-one correspondences between particular types of algebras and particular types of manifolds. The reason we include a discussion about Lie admissible algebras here is because it serves as a nice introduction to Lie admissible triple algebras, which are Lie admissible algebras with additional structure.

\begin{definition}[LAT]\label{def:LAT}
    An algebra $\mathcal{A}$ with product $\rhd$ is a \textit{Lie admissible triple} algebra (LAT) if it satisfies
    \begin{enumerate}[(i)]
        \item $[x,y,z] + [y,z,x] + [z,x,y] = 0$,
        \item $w \rhd [x,y,z] = [w\rhd x, y,z] + [x,w\rhd y, z] + [x,y , w \rhd z]$,
    \end{enumerate}
    for $x,y,z,w \in \mathcal{A}$, where $[\noarg , \noarg , \noarg]$ is defined by Equation \ref{eq:TB}.
\end{definition}

Let $M$ be a locally symmetric space. Then there is a canonical connection satisfying $\nabla R = 0$ and $T=0$. This is a result due to Nomizu, see \cite[Theorem 17.1]{Nomizu_54}. The Ricci identity (\ref{eq:TB-RicciId}) with zero torsion gives $R(X,Y)Z = [X,Y,Z]$. Together with the first Bianchi identity given in Equation \eqref{eq:1Bianchi} we see that the first condition for LAT is satisfied. Moreover, substituting the triple-bracket for the curvature in Equation (\ref{eq:nablaR=0}), we see that $\nabla R = 0$ is equivalent to the second LAT condition. Thus the connection algebra of a locally symmetric space is LAT.

\begin{note}
    The algebraic structure we call LAT has previously been defined under the name G-algebra \cite{GS_00,Sokolov_17}.
\end{note}

\begin{proposition} 
    If $\nabla$ is an affine connection on $M$, then the connection algebra $(\frak{X}_M, \nabla)$ is LAT if and only if $(M, \nabla)$ can be represented as a locally symmetric space with its canonical connection.
\end{proposition}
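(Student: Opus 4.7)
The plan is to prove both directions by translating between the two LAT axioms and the geometric conditions $T=0$ and $\nabla R = 0$, after which Nomizu's Theorem 17.1 closes the argument.

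The forward direction is essentially already done in the text preceding the proposition: if $(M,\nabla)$ is a locally symmetric space with its canonical connection, then $T=0$ and $\nabla R = 0$; Ricci's identity \eqref{eq:TB-RicciId} with $T=0$ gives $R(X,Y)Z = [X,Y,Z]$, the first Bianchi identity \eqref{eq:1Bianchi} with $T=0$ then reads as the Lie-admissibility axiom, and substituting $R$ by the triple-bracket in \eqref{eq:nablaR=0} yields the second LAT axiom. I would present this in one short paragraph.

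For the converse, I would assume $(\frak{X}_M,\nabla)$ is LAT and reverse the same translation. First, LAT axiom (i) is exactly the Lie-admissible condition \eqref{eq:LieAdm}, so Proposition \ref{prop:LieAdm_eq_T=0} immediately gives $T=0$. With $T=0$ in hand, Ricci's identity \eqref{eq:TB-RicciId} collapses to $R(X,Y)Z = [X,Y,Z]$; substituting this identification into LAT axiom (ii) yields
\[
\nabla_W\bigl(R(X,Y)Z\bigr) = R(\nabla_W X, Y)Z + R(X, \nabla_W Y)Z + R(X,Y)\nabla_W Z,
\]
which is precisely \eqref{eq:nablaR=0}, i.e.\ $\nabla R = 0$. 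Having established both $T=0$ and $\nabla R = 0$, Nomizu's Theorem 17.1 (cited in Remark \ref{rmk:CanCon}) guarantees that $(M,\nabla)$ is locally representable as a symmetric space with its canonical connection.

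The main obstacle I anticipate is largely cosmetic rather than mathematical: one needs to be careful that the LAT axioms, which are phrased for abstract vector fields, genuinely transfer through Ricci's identity to relations involving $R$ and $T$ as tensors, rather than just for a restricted family of vector fields. Since $\nabla$ is $C^{\infty}(M,\mathbb{R})$-linear in its first argument and $R,T$ are tensorial, this is automatic, but it is worth noting briefly. No deeper analytic or combinatorial work is required; the heart of the proof is the translation dictionary (Ricci's identity and the first Bianchi identity) together with Proposition \ref{prop:LieAdm_eq_T=0} and the cited theorem of Nomizu.
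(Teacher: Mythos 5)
Your proposal is correct and follows essentially the same route as the paper: the forward direction is the argument already given in the text, and the converse uses Proposition \ref{prop:LieAdm_eq_T=0} to get $T=0$, identifies $R(X,Y)Z$ with $[X,Y,Z]$ via Ricci's identity, reads off $\nabla R=0$ from the second LAT axiom, and concludes with Nomizu's Theorem 17.1. No further changes are needed.
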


\begin{proof}
We have already shown that the connection algebra of a symmetric space is LAT. To show the other way we note that if the connection algebra is LAT, then it is also Lie admissible hence $T=0$ by Proposition \ref{prop:LieAdm_eq_T=0}. When the torsion is zero we can identify the curvature and the triple-bracket by Equation (\ref{eq:TB-RicciId}), and then the second condition of LAT is equivalent to $\nabla R = 0$. 

Since $T=0$ and $\nabla R = 0$, $M$ can be represented as a locally symmetric space with its canonical connection by \cite[Theorem 17.1]{Nomizu_54}. 
\end{proof}

Lie admissible triple algebras are closely related to Lie triple systems. A Lie triple system is a vector space $\mathcal{L}$ together with a tri-linear operator $[\![ \noarg , \noarg, \noarg]\!]$ such that
\begin{align}
    [\![x,y,z]\!] &= - [\![y,x,z]\!] \label{LTS1} \\
    [\![x,y,z]\!] + [\![y,z,x]\!] + [\![z,x,y]\!] &= 0 \label{LTS2} \\
    [\![u,v,[\![x,y,z]\!] ]\!] &= [\![ [\![u,v,x]\!],y,z]\!] + [\![x , [\![u,v,y]\!] ,z]\!] + [\![x,y,[\![u,v,z]\!] ]\!] \label{LTS3} 
\end{align}
for all $x,y,z,u,v \in \mathcal{L}$. Lie triple systems relate to symmetric spaces similar to how Lie algebras relate to Lie groups; the tangent space of a symmetric space at some chosen base point can be naturally equipped with a tri-linear operator that satisfies the Lie triple relations \eqref{LTS1}-\eqref{LTS3}.


\begin{proposition}
    If $\mathcal{A}$ is a Lie admissible triple algebra, then we have
    \begin{equation}
        [u,v,[x,y,z] ] = [[u,v,x], y,z] + [x, [u,v,y],z] + [x,y,[u,v,z]] \label{eq:TB_der_prop}
    \end{equation}
    for all $x,y,z,u,v \in \mathcal{A}$, and $(\mathcal{A}, [\noarg, \noarg, \noarg ])$ is a Lie triple system. 
\end{proposition}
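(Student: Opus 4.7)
The plan is to split the statement into two parts: first establish the derivation identity \eqref{eq:TB_der_prop}, and then verify the three Lie triple system axioms \eqref{LTS1}--\eqref{LTS3} for $[\noarg,\noarg,\noarg]$.

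For the derivation identity, I would present a clean operator-theoretic argument rather than a brute expansion. Let $L_w$ denote the left-multiplication operator $L_w(x) = w \rhd x$. A direct rewriting of the triple bracket shows that
\[
[u,v,\noarg] = [L_u, L_v] - L_{u\rhd v - v\rhd u},
\]
where $[L_u,L_v] = L_u L_v - L_v L_u$ is the commutator of endomorphisms. By LAT axiom (ii), every $L_w$ is a derivation of the ternary operation $[\noarg,\noarg,\noarg]$. Since the derivations of any multilinear operation form a Lie subalgebra of $\operatorname{End}(\mathcal{A})$ under the commutator, $[L_u, L_v]$ is again a derivation. Likewise $L_{u\rhd v - v\rhd u}$ is a derivation. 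Hence $[u,v,\noarg]$ is a derivation of $[\noarg,\noarg,\noarg]$, which is exactly \eqref{eq:TB_der_prop}.

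For the Lie triple system structure, the three axioms fall out almost for free. Identity \eqref{LTS1}, the antisymmetry in the first two arguments, is immediate from the definition of $[x,y,z] = a_\rhd(x,y,z) - a_\rhd(y,x,z)$ in \eqref{eq:TB}. Identity \eqref{LTS2}, the ternary Jacobi relation, is precisely LAT axiom (i) from Definition \ref{def:LAT}. Identity \eqref{LTS3} is the derivation relation just established.

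The only substantive step is the first one, and even there the work is only to verify algebraically that $[u,v,\noarg] = [L_u,L_v] - L_{u\rhd v - v\rhd u}$ and to justify that the derivations of a multilinear map are closed under the commutator bracket; both are short routine checks. If one prefers to avoid the abstract language of derivation Lie algebras, the same result can be obtained by expanding $[u,v,[x,y,z]]$ directly via \eqref{eq:TB}, applying LAT axiom (ii) to each term $L_u$ or $L_v$ pushed through $[x,y,z]$, and observing that all the ``mixed'' terms of the form $[\ldots,L_u(\cdot),L_v(\cdot)]$ and $[L_v(\cdot),L_u(\cdot),\ldots]$ cancel in pairs, leaving exactly the three terms on the right-hand side of \eqref{eq:TB_der_prop}. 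I expect no real obstacle; the statement is essentially a bookkeeping consequence of the two LAT axioms.
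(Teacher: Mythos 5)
Your proposal is correct and follows the same decomposition as the paper: \eqref{LTS1} holds by the definition of the triple bracket, \eqref{LTS2} is exactly LAT axiom (i), and \eqref{LTS3} is the derivation identity \eqref{eq:TB_der_prop}, which the paper derives from LAT axiom (ii) by what it calls a straightforward computation. Your way of carrying out that computation --- writing $[u,v,\noarg] = [L_u,L_v] - L_{u\rhd v - v\rhd u}$, noting that LAT axiom (ii) says each $L_w$ is a derivation of the triple bracket, and invoking the standard fact that derivations of a multilinear operation are closed under the commutator --- is valid and is a cleaner packaging of the same cancellation argument.
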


\begin{proof}
    The triple bracket satisfies (\ref{LTS1}) by definition, and (\ref{LTS2}) follows from the first defining property of LAT algebras. The last condition (\ref{LTS3}) follows from the second defining property of LAT algebras by a straight forward computation. 
\end{proof}

The relationship between symmetric spaces and Lie triple systems have been investigated in \cite{Loos1} and \cite{Bertram_00}. LAT algebras provide a new perspective to symmetric spaces that was first introduced in \cite{LAT_MKS_2023}.

\section{The Connection Algebra of Reductive Homogeneous Spaces}\label{sec:2}

\subsection{D-algebra}

Consider an algebra $(\mathcal{A} , \rhd )$ and let $T(\mathcal{A}) = \mathbb{I} \oplus \mathcal{A} \oplus (\mathcal{A} \otimes \mathcal{A} ) \oplus \ldots$ be the free tensor algebra over $\mathcal{A}$. For $x,y \in \mathcal{A}$ we write $x \cdot y := x \otimes y$. Let $D(\mathcal{A}) = ( T(\mathcal{A}) , \cdot , \rhd )$ be the free tensor algebra with the product $\rhd$ from $\mathcal{A}$ extended by
\begin{align}
    x \rhd (\omega_1 \cdot \omega_2) &= (x \rhd \omega_1) \cdot \omega_2 + \omega_1 \cdot (x \rhd \omega_2), \label{eq:Dalg1} \\
    (x \cdot \omega_1) \rhd \omega_2 &= a_{\rhd}(x, \omega_1, \omega_2), \label{eq:Dalg2}
\end{align}
for $x \in \mathcal{A}$ and $\omega_1, \omega_2 \in T(\mathcal{A})$. This algebra is called the \textit{D-algebra} of $\mathcal{A}$ and was originally defined in \cite{MK_Wright_08}. Since the tensor product is associative we note that there is a natural Lie bracket $[\noarg , \noarg]$ on $D(\mathcal{A})$ given by $[x,y] = x\cdot y - y \cdot x$. Notice that $[x,y]\rhd z = [x,y,z]$.

\subsection{Post-Lie-Yamaguti Algebra}

\begin{definition}\label{def:InvConAlg}
    An algebra $\mathcal{A}$ with a product $\rhd$ and a bracket $[\![ \noarg , \noarg ]\!]$ is called a \textit{post-Lie-Yamaguti algebra} (PLY) if for all $x,y,z,w \in \mathcal{A}$ it satisfies
    \begin{enumerate}[(i)]
        \item $[\![ \noarg , \noarg ]\!]$ is skew-symmetric, that is $[\![x,x]\!] = 0$,
        \item $w \rhd [\![ x,y ]\!] = [\![ w \rhd x , y ]\!] + [\![ x, w\rhd y ]\!]$,
        \item $w \rhd [x,y,z] = [w \rhd x,y,z] + [x,w \rhd y,z] + [x,y,w \rhd z] + (w \cdot [\![ x,y ]\!] ) \rhd z - [\![ x,y ]\!] \rhd (w \rhd z)$,
        \item $\sum_{ \circlearrowleft (x,y,z)} [x,y,z] = \sum_{ \circlearrowleft (x,y,z)} [\![ [\![ x,y ]\!] , z ]\!] + [\![x,y ]\!] \rhd z$, and
        \item $\sum_{ \circlearrowleft (x,y,z)} [ [\![ x,y ]\!] ,z,w] = \sum_{ \circlearrowleft (x,y,z)} [\![ [\![ x,y]\!] , z]\!] \rhd w$.
    \end{enumerate}
\end{definition}

We remark that the bracket $[\![ \noarg, \noarg ]\!]$ in this definition is generally not a Lie bracket. Also note that the term $(w \cdot [\![ x,y ]\!] ) \rhd z$ in (iii) uses notation from the D-algebra $D(\mathcal{A})$, although it could be written using only elements in $\mathcal{A}$ by \eqref{eq:Dalg2}.

\begin{proposition}
If $\nabla$ is an affine connection on $M$, then the connection algebra $(\mathfrak{X}_M, \nabla)$ is a post-Lie-Yamaguti algebra if and only if $(M, \nabla)$ can be represented as a locally reductive homogeneous space with its canonical connection.
\end{proposition}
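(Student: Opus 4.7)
The plan is to set up the dictionary $\rhd = \nabla$ and $[\![X,Y]\!] := -T(X,Y)$, and then translate each of the five PLY axioms into a geometric statement about $T$ and $R$ via Ricci's identity \eqref{eq:TB-RicciId}, which now reads $[X,Y,Z] = R(X,Y)Z + \nabla_{[\![X,Y]\!]}Z$. With this translation, axiom (i) becomes skew-symmetry of the torsion, axiom (ii) becomes precisely \eqref{eq:nablaT=0}, axiom (iii) (modulo (ii)) becomes precisely \eqref{eq:nablaR=0}, axiom (iv) becomes the first Bianchi identity \eqref{eq:1Bianchi} combined with $\nabla T = 0$, and axiom (v) becomes the second Bianchi identity \eqref{eq:2Bianchi} combined with $\nabla R = 0$. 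So the whole statement reduces to the equivalence of the pair $(\nabla T = 0,\ \nabla R = 0)$ with one of the PLY axiom systems, together with Nomizu's characterisation \cite[Theorem 18.1]{Nomizu_54} of locally reductive homogeneous spaces.

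For the forward direction, I would assume $M$ is a locally reductive homogeneous space with its canonical connection, so that $\nabla R = 0$ and $\nabla T = 0$ by Remark \ref{rmk:CanCon}. Axioms (i) and (ii) are then immediate. For (iv) I would expand $\sum [X,Y,Z] = \sum R(X,Y)Z - \sum \nabla_{T(X,Y)}Z$ on the left and $\sum T(T(X,Y),Z) - \sum \nabla_{T(X,Y)}Z$ on the right, then use \eqref{eq:1Bianchi} with the vanishing of $\sum (\nabla_X T)(Y,Z)$. For (v), a similar expansion reduces the claim to $\sum R(T(X,Y),Z) = 0$, which is \eqref{eq:2Bianchi} with $\nabla R = 0$ after reindexing the cyclic sum and using skew-symmetry of $R$ in its first two slots.

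The main effort goes into axiom (iii). Here I would expand both sides using Ricci's identity and the D-algebra relation \eqref{eq:Dalg2}, which rewrites $(W \cdot [\![X,Y]\!]) \rhd Z = \nabla_W \nabla_{[\![X,Y]\!]} Z - \nabla_{\nabla_W [\![X,Y]\!]} Z$. After expanding, every $\nabla\nabla$ term and every $R(\nabla_W X, Y)$, $R(X, \nabla_W Y)$, $R(X,Y)\nabla_W Z$ term appears on both sides; the crucial cancellation of the remaining $\nabla$-subscripts uses axiom (ii) to turn $\nabla_W [\![X,Y]\!]$ into $[\![\nabla_W X, Y]\!] + [\![X, \nabla_W Y]\!]$. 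What survives is exactly $(\nabla_W R)(X,Y)Z = 0$, giving $\nabla R = 0$. This expansion is the main obstacle: it is routine but one must be careful with signs and with separating the curvature contribution from the torsion contribution in $[X,Y,Z]$.

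For the backward direction I would reverse the argument. Starting from a PLY structure on $(\mathfrak{X}_M,\nabla)$ with the convention $[\![X,Y]\!] = -T(X,Y)$, axiom (ii) directly yields \eqref{eq:nablaT=0}, and then axiom (iii) combined with (ii), via the computation sketched above, yields \eqref{eq:nablaR=0}. Axioms (iv) and (v) are then automatic from the Bianchi identities and are consistent (they do not add further geometric content). Since $\nabla R = 0$ and $\nabla T = 0$, Nomizu's theorem \cite[Theorem 18.1]{Nomizu_54} gives that $(M,\nabla)$ can be locally represented as a reductive homogeneous space with its canonical connection, completing the proof.
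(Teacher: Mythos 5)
Your proposal is correct and follows essentially the same route as the paper: the same dictionary $\rhd=\nabla$, $[\![X,Y]\!]=-T(X,Y)$, the translation of each PLY axiom via Ricci's identity into skew-symmetry of $T$, $\nabla T=0$, $\nabla R=0$, and the two Bianchi identities, and finally Nomizu's theorem for the geometric equivalence. The computations you sketch for axioms (iii)--(v) match the ones carried out in the paper's proof.
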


\begin{proof}
Note that $M$ can be represented as a locally reductive homogeneous space if and only if it has a connection satisfying $\nabla R = 0$ and $\nabla T =0$. This is a result due to Nomizu, see \cite[Theorem 18.1]{Nomizu_54}. Let $[\![ X, Y ]\!] = -T(X,Y)$. First we assume $M$ to be a locally reductive homogeneous space and show that the connection algebra must be a post-Lie-Yamaguti algebra.
    \begin{enumerate}[(i)]
        \item This follows from the definition of torsion.
        \item This follows immediately from $\nabla T=0$.
        \item Consider $W \rhd [X,Y,Z] - [W \rhd X, Y ,Z] - [X, W\rhd Y, Z] - [X,Y,W\rhd Z]$, and rewrite in terms of curvature and torsion by (\ref{eq:TB-RicciId}). The curvature terms cancel because $\nabla R = 0$. The remaining torsion terms are
        \[
        \begin{split}
            & \, - W \rhd (T(X,Y) \rhd Z) + T(W \rhd X, Y) \rhd Z \\
            & \, + T(X, W\rhd Y) \rhd Z + T(X,Y) \rhd (W \rhd Z) \\
            =& \, -(W \cdot T(X,Y)) \rhd Z + T(X,Y) \rhd (W \rhd Z) \\
            =& \,\, (W \cdot [\![ X,Y ]\!]) \rhd Z - [\![ X,Y ]\!] \rhd (W\rhd Z).
        \end{split}
        \]
        \item This is a consequence of the first Bianchi identity; exchange $R(X,Y)Z$ with $[X,Y,Z] - [\![X,Y]\!] \rhd Z$ and $T(X,Y)$ with $-[\![ X,Y ]\!]$ in Equation (\ref{eq:1Bianchi}). Note that $\nabla T =0$.
        \item This is a consequence of the second Bianchi identity; apply the same exchange to Equation (\ref{eq:2Bianchi}). Note that $\nabla R = 0$.
    \end{enumerate}
    Lastly, we assume that $(\frak{X}_M, \nabla)$ is a post-Lie-Yamaguti algebra. Notice that condition (ii) and (iii) in Definition \ref{def:InvConAlg} are in fact equivalent to $\nabla T=0$ and $\nabla R = 0$ respectively, hence $M$ can be locally represented as a reductive homogeneous space.
\end{proof}

\begin{proposition}
    If $\mathcal{A}$ is a post-Lie-Yamaguti algebra, then $\mathcal{A}$ satisfies
    \begin{equation}\label{PLY:6}
        \begin{split}
            [u,v,[x,y,z]] =& [[u,v,x], y,z] + [x, [u,v,y],z] + [x,y,[u,v,z]] \\
            & + ([u,v]\cdot [\![ x,y]\!] ) \rhd z - [\![x, y]\!] \rhd [u,v,z]
        \end{split}
    \end{equation}
    for all $x,y,z,u,v \in \mathcal{A}$. 
\end{proposition}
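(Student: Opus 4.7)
The plan is to compute $[u,v,[x,y,z]]$ directly, expanding the outer triple bracket with \eqref{eq:TB} and then applying PLY condition (iii) of Definition \ref{def:InvConAlg} twice to deal with the inner triple bracket. Explicitly, I would first write
\[
[u,v,[x,y,z]] = u\rhd(v\rhd[x,y,z]) - (u\rhd v)\rhd[x,y,z] - v\rhd(u\rhd[x,y,z]) + (v\rhd u)\rhd[x,y,z],
\]
and then apply (iii) to each of the four factors $w\rhd[x,y,z]$ with $w \in \{v,\, u\rhd v,\, u,\, v\rhd u\}$; this produces triple-bracket terms together with correction terms of the shape $(w\cdot[\![x,y]\!])\rhd z - [\![x,y]\!]\rhd(w\rhd z)$. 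For the two nested contributions $u\rhd(v\rhd[x,y,z])$ and $v\rhd(u\rhd[x,y,z])$, a second application of (iii) is required on the inner triple brackets such as $u\rhd[v\rhd x,y,z]$, and the D-algebra axioms \eqref{eq:Dalg1}--\eqref{eq:Dalg2} are needed to expand actions like $u\rhd\bigl((v\cdot[\![x,y]\!])\rhd z\bigr)$.

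Collecting the pure triple-bracket summands, any ``mixed'' terms of the form $[\omega_2\rhd x,\omega_1\rhd y,z]$ appear symmetrically in the $u$-then-$v$ and $v$-then-$u$ expansions and cancel by antisymmetry in $u,v$, so that in each of the three slots what remains is $[a_\rhd(u,v,\bullet) - a_\rhd(v,u,\bullet), \bullet, \bullet] = [[u,v,\bullet],\bullet,\bullet]$ by \eqref{eq:TB}. Thus the triple-bracket portion of the expansion sums to $[[u,v,x],y,z] + [x,[u,v,y],z] + [x,y,[u,v,z]]$, exactly as on the right-hand side of \eqref{PLY:6}. This step is combinatorially identical to the LAT derivation of \eqref{eq:TB_der_prop} and uses only the LAT-like ``derivation'' part of condition (iii), i.e.\ what survives when the $[\![\cdot,\cdot]\!]$-correction is ignored.

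For the $[\![\cdot,\cdot]\!]$-correction terms, I would repeatedly apply PLY condition (ii), $w\rhd[\![x,y]\!] = [\![w\rhd x,y]\!] + [\![x,w\rhd y]\!]$, together with \eqref{eq:Dalg1}--\eqref{eq:Dalg2} to regroup the accumulated summands; a bilinear expansion in $u,v$ analogous to the one in paragraph two upgrades (ii) to the identity $[u,v]\rhd[\![x,y]\!] = [\![[u,v,x],y]\!] + [\![x,[u,v,y]]\!]$, which is what allows the corrections to collapse to $([u,v]\cdot[\![x,y]\!])\rhd z - [\![x,y]\!]\rhd[u,v,z]$. The main obstacle is bookkeeping: each secondary application of (iii) produces several $[\![\cdot,\cdot]\!]$-terms in different ``positions'' (some with $[\![x,y]\!]$ acting on $z$ after being acted on by $u$ or $v$, some with $[\![v\rhd x, y]\!]$ or $[\![x, v\rhd y]\!]$ acting directly on $z$, and so on). The calculation becomes tractable only if it is organised around the two target expressions $([u,v]\cdot[\![x,y]\!])\rhd z$ and $[\![x,y]\!]\rhd[u,v,z]$, each of which is itself expanded via \eqref{eq:Dalg1}--\eqref{eq:Dalg2}, and matched term by term against the accumulated corrections.
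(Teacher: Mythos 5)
Your proposal is correct and follows essentially the same route as the paper's proof: expand $[u,v,[x,y,z]]$ via \eqref{eq:TB}, apply condition (iii) of Definition \ref{def:InvConAlg} twice (using \eqref{eq:Dalg1}--\eqref{eq:Dalg2} and condition (ii) to unpack the nested correction terms), observe that the LAT-like part reproduces the three derivation terms exactly as in the proof of \eqref{eq:TB_der_prop}, and let the remaining corrections cancel under the $u\leftrightarrow v$ antisymmetrization down to $([u,v]\cdot[\![x,y]\!])\rhd z - [\![x,y]\!]\rhd[u,v,z]$. The only remaining work is the term-by-term bookkeeping you describe, which is exactly the computation carried out in the paper.
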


\begin{proof}
    Notice that condition (iii) in Definition \ref{def:InvConAlg} is very similar to condition (ii) in Definition \ref{def:LAT}. To prove that any LAT algebra satisfies Equation (\ref{eq:TB_der_prop}) we only need this last property, hence in our computation below we have filtered out all the terms that will sum up to Equation (\ref{eq:TB_der_prop}) and focused on the rest. We get
    \[
    \begin{split}
        [u,v,[x,y,z]] &= u \rhd (v \rhd [x,y,z] ) - (u \rhd v) \rhd [x,y,z] - \{ u \leftrightarrow v \} \\
        &= [[u,v,x], y,z] + [x, [u,v,y],z] + [x,y,[u,v,z]] \\
        & \quad \,\, + \Big( (u \cdot [\![v \rhd  x,y]\!] ) \rhd z + (u \cdot [\![ x,v \rhd y]\!] ) \rhd z \\
        & \quad \,\, + (u \cdot [\![ x,y]\!] ) \rhd (v \rhd z) -[\![v \rhd x,y]\!] \rhd (u\rhd z) \\
        & \quad \,\, -[\![ x,v \rhd y]\!] \rhd (u\rhd z)  -[\![ x,y]\!] \rhd (u\rhd (v\rhd z))  \\
        & \quad \,\, + (u\cdot v \cdot [\![ x,y]\!] ) \rhd z + ((u\rhd v) \cdot [\![ x,y]\!] ) \rhd z \\
        & \quad \,\, + (v \cdot [\![ u\rhd x, y]\!])\rhd z + (v \cdot [\![ x,u\rhd y]\!] ) \rhd z \\
        & \quad \,\, - (u \cdot [\![ x,y]\!] ) \rhd (v \rhd z) - ([\![ u \rhd x,y]\!] ) \rhd (v \rhd z) \\
        & \quad \,\, - ([\![ x,u \rhd y]\!] ) \rhd (v \rhd z) - (( u\rhd v)\cdot [\![x,y ]\!] ) \rhd z \\
        & \quad \,\,+ [\![ x,y]\!] \rhd ((u\rhd v) \rhd z) - \{ u\leftrightarrow v \}  \Big) \\
        &= [[u,v,x], y,z] + [x, [u,v,y],z] + [x,y,[u,v,z]] \\
        & \quad \,\, + \Big( (u\cdot v \cdot [\![x,y]\!] - [\![ x,y]\!] \rhd (u \rhd (v \rhd z)) \\
        & \quad \,\, + [\![x,y]\!] \rhd ( (u \rhd v) \rhd z) - \{ u \leftrightarrow v \} \Big) \\
        &= [[u,v,x], y,z] + [x, [u,v,y],z] + [x,y,[u,v,z]] \\
        & \quad \,\, + ([u,v]\cdot [\![ x,y]\!] ) \rhd z - [\![ x,y]\!] \rhd [u,v,z].
    \end{split}
    \]
    Here $\{ u \leftrightarrow v \}$ means "repeat all the terms inside the same parentheses, but switch the places of $u$ and $v$". Notice that $[x,y,z] = x \rhd (y \rhd z) - (x \rhd y) \rhd z - \{ x \leftrightarrow y \}$. 
\end{proof}

We include a remark to show that PLY is a generalization of both post-Lie and LAT algebras.

\begin{remark}\label{rmrk1}
    A post-Lie algebra is also a PLY algebra: Let $(\mathcal{A}, [\![ \noarg, \noarg]\!] , \rhd )$ be a post-Lie algebra. Then the first post-Lie relation is equal to (ii) in Definition \ref{def:InvConAlg}. The second post-Lie relation implies that (iii) and (v) is satisfied, while (iv) is satisfied by the Jacobi identity of $[\![ \noarg, \noarg ]\!]$ and (i) is satisfied by skew-symmetry of $[\![ \noarg , \noarg ]\!]$.\\
    A LAT algebra $(\mathcal{A}, \rhd )$ with a bracket defined by $[\![ \noarg , \noarg ]\!] = 0$ is a PLY: In this case the relations (i), (ii) and (v) are trivially satisfied, while (iii) and (iv) reduce to exactly the defining relations of LAT. 
\end{remark}

\subsection{Classification of Connection Algebras with Parallel Curvature and Torsion}

In the spirit of Nomizu's classification of manifolds with parallel curvature and torsion, we give a chart that summarizes the previous sections and classifies the different types of connection algebras associated with these manifolds: 

\begin{center}
\vspace{0.2in}{\color{white}pp}
    \begin{tabular}{|l|c|c|} 
    \hline
        & $R=0$ & $\nabla R =0$ \\ 
    \hline
        $T=0$ & pre-Lie algebra & Lie admissible triple algebra \\ 
    \hline
        $\nabla T=0$ & post-Lie algebra & post-Lie-Yamaguti algebra \\ 
    \hline
    \end{tabular}
    \vspace{0.2in}{\color{white}pp}
\end{center}

\section{The Free Algebra}\label{sec:3}

\begin{definition}[Symmetrization map]\label{def:sym_map}
    Let $\mathcal{A}$ be an algebra with product $\rhd$ and let $D(\mathcal{A})$ be the D-algebra of $\mathcal{A}$. Let $x_1 , \ldots , x_n$ be elements of a chosen basis of $\mathcal{A}$. The \textit{symmetrization map} $\frak{s}$ is the linear map defined by 
    \begin{equation}\label{eq:sym_map}
        \begin{split}
            \frak{s} : D(\mathcal{A}) & \longrightarrow D(\mathcal{A}) \\
            x_1 \cdot x_2 \cdots x_n &\longmapsto \frac{1}{n!} \sum_{\sigma \in S_n} x_{\sigma^{-1}(1)} \cdot x_{\sigma^{-1}(2)} \cdots x_{\sigma^{-1}(n)}  
        \end{split}
    \end{equation}
    where $S_n$ is the symmetric group of degree $n$.
\end{definition}

An element of the natural basis of $D(\mathcal{A})$ is called a \textit{word}. If an element $\omega \in D(\mathcal{A})$ is contained in $\mathcal{A}^{\otimes n}$ we will write $l(\omega)= n$ and say that $n$ is the length of $\omega$. We give a few definitions of noteworthy types of elements in $D(\mathcal{A})$:
\begin{itemize}
    \item An element in the image of the symmetrization map $\frak{s}$ is called a \textit{symmetric word}. Note that the empty word is a symmetric word.
    \item An element of the form $\frak{s}(x_1 \cdots x_n) \cdot [y,z]$ is called a \textit{symmetric-bracket word}. Notice the edge case of $n=0$; $[y,z]$ is a symmetric-bracket word.
    \item A symmetric-bracket word $\frak{s}(x_1 \cdots x_n) \cdot [y,z]$ is called \textit{ordered} with respect to a given order $\prec$ on the basis of $\mathcal{A}$ if $y\succ z$ and $x_i \succeq z$ for $1 \leq i \leq n$.
    \item An element $\omega = \omega_1 \cdots \omega_k$ is called an \textit{ordered symmetric-bracket-block word} (or OSBB-word) if $\omega_i$ is an ordered symmetric-bracket word for $i<k$ and $\omega_k$ is a symmetric word. We say that $\omega$ has $k$ blocks. 
\end{itemize}

The elements mentioned in the list above are, in general, linear combinations of words. We will allow this abuse of language since these elements share many of the qualities we commonly associate with words.  

\begin{theorem}\label{th:S}
    Let $\mathcal{A}$ be an algebra with a totally ordered basis. Then any element in $T(\mathcal{A})$ can be uniquely written as a linear combination of OSBB-words.
\end{theorem}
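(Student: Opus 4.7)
My plan is to prove the theorem in two parts: existence of an OSBB expansion (spanning) and uniqueness (linear independence).

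For existence, I would argue by a well-founded induction on a length-based complexity measure. The only identity at my disposal in $T(\mathcal{A})$ is the definitional $a\cdot b = b\cdot a + [a,b]$. Given $\omega = x_{1}\cdots x_{n}$, let $z = \min\{x_{1},\ldots,x_{n}\}$ in the chosen order. The idea is to push the rightmost occurrence of $z$ to the last position by successive adjacent transpositions; each step $\cdots u\cdot z\cdots = \cdots z\cdot u\cdots + \cdots[u,z]\cdots$ yields a main term (with $z$ one step further right) and a correction term in which the pair $u\cdot z$ is replaced by the bracket $[u,z]$. Because $z$ is the minimum, $u\succ z$ whenever $u\neq z$, so each generated bracket is a valid OSBB bracket; moreover, any letters appearing in the symmetric prefix of that block are $\succeq z$ by the same minimality. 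Once $z$ reaches the last position, I absorb it into the final symmetric block via an identity of the form $\mathfrak{s}(v_{1}\cdots v_{m})\cdot z = \mathfrak{s}(z v_{1}\cdots v_{m}) + \text{(bracket corrections)}$; again each correction is a valid ordered symmetric-bracket word. A complexity measure such as ``number of tensor positions not yet assigned to an OSBB block, then lex-order of the rightmost occurrence of $z$'' strictly decreases, so the procedure terminates.

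For uniqueness, rather than chasing leading monomials (which collide under the natural lex orderings) I would establish linear independence by a direct dimension count. Let $N = \dim\mathcal{A}$ and $S(t) = (1-t)^{-N}$ be the generating function of symmetric words (multisets of size $n$ from $N$ letters). Ordered symmetric-bracket words with smaller bracket element $e_{i}$ are counted by $(N-i)\,t^{2}(1-t)^{-(N-i+1)}$, since there are $N-i$ choices for $y\succ z$ and the symmetric prefix is a multiset drawn from an $(N-i+1)$-element subalphabet; summing,
\[
A(t) = \sum_{i=1}^{N}(N-i)\,t^{2}(1-t)^{-(N-i+1)}.
\]
Because an OSBB-word is a concatenation of any number of ordered symmetric-bracket blocks followed by exactly one (possibly empty) symmetric block, the total generating function is $G(t) = S(t)/(1-A(t))$. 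A direct manipulation yields $1-A(t) = (1-Nt)(1-t)^{-N} = (1-Nt)\,S(t)$, so $G(t) = (1-Nt)^{-1} = \sum_{n\geq 0}N^{n}t^{n}$, matching $\dim T(\mathcal{A})_{n}$. Surjectivity from the existence argument together with this dimension match forces linear independence in each degree.

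The main obstacle is the existence step: since swapping does not decrease the ambient length, a naive length-induction fails, and care is needed both to design a well-founded complexity measure and to verify that every correction term generated along the way indeed sits in OSBB form (with the $y\succ z$ and $x_{i}\succeq z$ conditions intact) or can be recursively written as such. Once this is established, the uniqueness step reduces to the generating-function identity, which is a straightforward if slightly finicky calculation.
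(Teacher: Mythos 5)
The paper does not actually prove Theorem \ref{th:S}; it defers entirely to \cite{LAT_MKS_2023}, where the statement is established by an explicit rewriting procedure together with an ordering/triangularity argument (the later appeal to \cite[Lemma 3.11]{LAT_MKS_2023} in the proof of Proposition \ref{prop:T_basis} is part of that machinery). So the relevant comparison is with that reference, and your proposal splits into a half that genuinely improves on it and a half that has a real gap.

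The uniqueness half is correct and is a different, arguably cleaner route than leading-term triangularity. I checked the generating-function identity: with $s=(1-t)^{-1}$ one gets $A(t)=(N-1)s^{N}-Ns^{N-1}+1$, and since $s-1=ts$ this equals $1-(1-Nt)(1-t)^{-N}$, so indeed $G(t)=S(t)/(1-A(t))=(1-Nt)^{-1}$ and the count of formal OSBB-words in each degree matches $\dim\mathcal{A}^{\otimes n}=N^{n}$. Spanning plus this count forces linear independence. You should add one sentence reducing the infinite-dimensional case to this one: any finite family of tensors involves finitely many basis letters, and an OSBB-word over a finite sub-alphabet with the induced order is still an OSBB-word, so the finite count suffices.

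The gap is in the existence half, and it is larger than the bookkeeping you flag. First, a small symptom: the identity you display, $\cdots u\cdot z\cdots=\cdots z\cdot u\cdots+\cdots[u,z]\cdots$, moves $z$ to the \emph{left}; to push it right you need $z\cdot u=u\cdot z-[u,z]$. More substantively, a correction term has the form $p\cdot[u,z]\cdot q$ with the bracket in the middle of an otherwise unprocessed word, and this is not an OSBB-word: (a) the prefix $p$ must still be symmetrized, and symmetrizing it creates new brackets $[p_{i},p_{j}]$ whose smaller argument is in general strictly larger than the global minimum $z$, so the condition ``all prefix letters $\succeq$ the smaller bracket argument'' for \emph{those} blocks is not inherited from minimality of $z$ and must be arranged separately; (b) the suffix $q$ must be decomposed into further blocks sitting to the \emph{right} of a bracket block, and since the unique symmetric block must come last, any letter of $p$ or $q$ that ultimately belongs to the final symmetric block has to be commuted past an already-formed bracket, producing nested brackets $[x,[u,z]]$ that are outside the OSBB vocabulary and must be re-expanded into words and re-processed. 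Your proposed complexity measure does not obviously decrease under (a) and (b), and verifying the ordering conditions through this recursion is precisely the content of the theorem. As written, the existence step is a plausible plan rather than a proof; it is exactly the part for which the present paper relies on the algorithm of \cite{LAT_MKS_2023}.
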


The proof of this theorem can be found in \cite{LAT_MKS_2023}. Another way of stating this theorem is that the OSBB-words of elements from the totally ordered basis of $\mathcal{A}$ is a basis for $T(\mathcal{A})$, and then also for $D(\mathcal{A})$. 

\begin{remark}\label{rem:OSBB}
    Given any word $\eta = x_1 \cdots x_n$ with $x_1, \ldots ,x_n \in \mathcal{A}$, by the theorem above there is a unique way to write $\eta$ as a linear combination of OSBB-words. If $\omega$ is one of the OSBB-words with a non-zero coefficient in this representation of $\eta$, then $l(\omega) = l(\eta)$ and each of the elements $x_i$ will appear exactly one time in the expression of $\omega$. 
\end{remark}

\begin{definition}\label{def:Delta}
    Given a total order $\prec = \prec_B$ on a basis $B$ of $\mathcal{A}$, we define a total order $\prec_{\Delta} = \prec_{\Delta_B}$ on the set of OSBB-words over $B$ in $D(\mathcal{A})$.
    \begin{enumerate}[(i)]
        \item \textbf{Symmetric words:} Let $\omega = \frak{s}(x_1 \cdots x_n)$ and $\eta = \frak{s}(y_1 \cdots y_m)$ be two symmetric words and let the indexing be such that $x_1 \succeq \ldots \succeq x_n$ and $y_1 \succeq \ldots \succeq y_m$. Then $\omega \succ_{\Delta} \eta$ if
        \begin{itemize}
            \item $n>m$ (compare length), or
            \item $l(\omega) = l(\eta)$, $x_i = y_i$ for $i<j$ and $x_j \succ y_j$.
        \end{itemize}
        \item \textbf{Ordered symmetric-bracket words:} Let $\omega = \frak{s}(x_1 \cdots x_n)\cdot[u,v]$ and $\eta = \frak{s}(y_1 \cdots y_m)\cdot[w,z]$ be two ordered symmetric-bracket words and let the indexing be such that $x_1 \succeq \ldots \succeq x_n$ and $y_1 \succeq \ldots \succeq y_m$. Then $\omega \succ_{\Delta} \eta$ if
        \begin{itemize}
            \item $n>m$, or
            \item $n=m$ and $v \succ z$, or
            \item $n=m$, $v=z$ and $u\succ w$, or 
            \item $n=m$, $v=z$, $u=w$, and $\frak{s}(x_1 \cdots x_n) \succeq_{\Delta} \frak{s}(y_1 \cdots y_m)$.
        \end{itemize}
        \item \textbf{Compare symmetric word and ordered symmetric-bracket word:} Let $\omega = \frak{s}(x_1 \cdots x_n)\cdot[u,v]$ be an ordered symmetric-bracket word and $\eta = \frak{s}(y_1 \cdots y_m)$ a symmetric word. 
        Then $\omega \succ_{\Delta} \eta$ if and only if
        \begin{itemize}
            \item $l(\omega) > l(\eta)$.
        \end{itemize}
        \item \textbf{OSBB-words:} let $\omega = \omega_1 \cdots \omega_k$ and $\eta = \eta_1 \cdots \eta_r$ be two OSBB-words. Then $\omega \succ_{\Delta} \eta$ if
        \begin{itemize}
            \item $l(\omega)>l(\eta)$, or 
            \item $l(\omega)=l(\eta)$, $\omega_i = \eta_i$ for $i<j$ and $\omega_j \succ_{\Delta} \eta_j$.
        \end{itemize}
    \end{enumerate}
\end{definition}

We must remark that there is nothing special about this order as far as the author knows. It is included to make the constructions in later sections concrete. 

\subsection{Trees}

Let $\Alg(\mathcal{C} )$ be the free algebra over $\mathbb{R}$ generated by a set $\mathcal{C}$ and a binary operator $\rhd$. The set $\OT(\mathcal{C})$ of \textit{planar rooted trees} with nodes colored by $\mathcal{C}$ is a basis for $\Alg(\mathcal{C})$, see for instance \cite{Al-Kaabi_14}. We give the first few planar rooted trees with a single color:
\[
\OT(\{\ab\}) = \{ \ab , \aabb, \aaabbb, \aababb , \aaaabbbb, \aaababbb , \aaabbabb , \aabaabbb, \aabababb , \ldots \} .
\]
Each planar rooted tree can be uniquely described by giving its \textit{branches} in order together with its \textit{root}. The root of a planar rooted tree is an element from the set $\mathcal{C}$ and the branches are themselves planar rooted trees. For example the tree $\AabaabbB$ has the root $\AB$ and the two branches $\ab$ and $\aabb$ in that order, and we can write 
\[
\AabaabbB = t(\AB; \ab, \aabb).
\]
In general if $\tau \in \OT(\mathcal{C})$ with a root $c\in \mathcal{C}$ and branches $\tau_1 , \ldots , \tau_n \in \OT(\mathcal{C})$ in that order, we can write $\tau = t( c ; \tau_1 , \ldots , \tau_n)$. Let $\OF(\mathcal{C})$ denote the set of \textit{ordered forests} of planar rooted trees with nodes colored by $\mathcal{C}$: 
\[
\OF(\{\ab \}) = \{ \mathbb{I} , \ab , \aabb , \ab\, \ab , \aaabbb, \aababb, \aabb \, \ab , \ab \, \aabb , \ab \, \ab \, \ab , \ldots \}.
\]
First we note that for $\tau = t( c ; \tau_1 , \ldots , \tau_n) \in \OT(\mathcal{C})$ the collection of branches can be thought of as an ordered forest $\tau_1 \tau_2 \ldots \tau_n \in \OF(\mathcal{C})$. Secondly we note that there is a trivial injective map from $\OF(\mathcal{C})$ to $D(\Alg(\mathcal{C}))$ given by $\tau_1 \tau_2 \ldots \tau_n \mapsto \tau_1 \cdot \tau_2 \cdots \tau_n$. For simplicity we will consider $\OF(\mathcal{C})$ as a subset of $D(\Alg(\mathcal{C}))$. Lastly we note the following relation
\begin{equation}\label{eq:tree_from_forest}
    \tau = t( c ; \tau_1 , \ldots , \tau_n) = (\tau_1 \cdots \tau_n) \rhd c
\end{equation}
which follows from Equation (\ref{eq:Dalg2}). 

For a tree $\tau \in \OT(\mathcal{C})$ we let $|\tau|_V$ denote the number of vertices in $\tau$, e.g. $|\aabababb|_V=4$. Extend this definition to forests by $|\tau_1 \cdots \tau_n|_V= |\tau_1|_V + \ldots + |\tau_N|_V$.

\subsection{Two Bases of the Free Algebra}

Whenever needed, we will assume that $\mathcal{C}$ is a totally ordered set with the order given as $\prec_{\mathcal{C}}$. By Equation (\ref{eq:tree_from_forest}) we can give an iterative definition of the set $\OT(\mathcal{C})$:
\begin{enumerate}[(i)]
    \item if $c \in \mathcal{C}$ then $c \in \OT(\mathcal{C})$, and
    \item if $x_1, \ldots , x_n \in \OT(\mathcal{C})$ and $c \in \mathcal{C}$, then $(x_1 \cdots x_n) \rhd c \in \OT(\mathcal{C})$. 
\end{enumerate}

We want to construct another basis of $\Alg(\mathcal{C})$ by using Theorem \ref{th:S}. The way to generate new ordered rooted trees is to select any number of ordered rooted trees already constructed (that is any number of elements already in the basis), choose a root (that is an element from $\mathcal{C}$), and then choose an order in which to connect the family of trees to the root (corresponding to choosing a permutation). We can follow the same steps, but instead of working with all permutations we use all the OSBB-words, and by Theorem \ref{th:S} the result will be another basis of $\Alg(\mathcal{C})$. We will call this basis $S$. One challenge when constructing $S$ is that to create an OSBB-word of elements in $S$ we need an order on $S$, hence the order must be expanded simultaneously with the construction of the set. We refer to \cite{LAT_MKS_2023} for more details.

    Let $S \subset \Alg(\mathcal{C})$ be a set with a total order $\prec = \prec_S$ such that
    \begin{enumerate}[(i)]
        \item If $c \in \mathcal{C}$ then $c \in S$, and $c_1 \succ c_2$ if $c_1 \succ_{\mathcal{C}} c_2$.
        \item If $\omega \in D(\Alg(\mathcal{C}))$ is an OSBB-word of elements from $S$ and $c \in \mathcal{C}$, then $\omega \rhd c$ is in $S$, and $\omega \rhd c \succ \eta \rhd d$ if
        \begin{itemize}
            \item $|\omega|_V > |\eta|_V$, or
            \item $|\omega|_V = |\eta|_V$ and $\omega \succ_{\Delta_S} \eta$, or
            \item $\omega = \eta$ and $c \succ_{\mathcal{C}} d$.
        \end{itemize}
    \end{enumerate}

We list the elements of $S \subset \Alg(\{\ab\})$ with up to five vertices in increasing order:

\[
\begin{split}
    \textbf{1 vertex:}& \quad \ab, \\
    \textbf{2 vertices:}& \quad \aabb, \\
    \textbf{3 vertices:}& \quad \aabb  \rhd \ab, \,\, (\ab \, \ab) \rhd \ab  \\
    \textbf{4 vertices:}& \quad \aaabbb  \rhd \ab , \,\, \aababb  \rhd \ab, \,\, \left[\aabb , \ab \right] \rhd \ab , \,\, \frak{s}\left(\aabb \, \ab \right) \rhd \ab, \,\, (\ab \, \ab \, \ab ) \rhd \ab, \\
    \textbf{5 vertices:}& \quad \aaaabbbb \rhd \ab, \,\, \aaababbb  \rhd \ab, \,\, \left( \left[\aabb , \ab\right] \rhd \ab \right) \rhd \ab, \,\, \left( \frak{s}\left(\aabb \, \ab\right) \rhd \ab\right) \rhd \ab, \,\, \left( (\ab \, \ab \, \ab ) \rhd \ab \right) \rhd \ab, \\
    & \quad \left[ \aaabbb, \ab \right] \rhd \ab , \,\, \left[\aababb , \ab \right] \rhd \ab , \,\, \frak{s}\left( \aaabbb \, \ab \right) \rhd \ab , \,\, \frak{s} \left( \aababb \ab \right) \rhd \ab , \,\, \left( \aabb \, \aabb \right) \rhd \ab, \\
    & \quad \left( \left[\aabb , \ab \right] \cdot \ab \right) \rhd \ab, \,\, \left( \ab \cdot \left[\aabb, \ab \right] \right) \rhd \ab, \,\, \frak{s}\left( \aabb \, \ab \, \ab \right) \rhd \ab, \,\, (\ab \, \ab \, \ab \, \ab ) \rhd \ab.
\end{split}
\]

\section{The Free Algebra Over Two Operators}\label{sec:4}

Let $A := \Alg(\mathcal{C} ; [\![ \noarg, \noarg ]\!] , \rhd )$ be the free algebra generated by the set $\mathcal{C}$ and the two binary operators $[\![ \noarg , \noarg ]\!]$ and $\rhd$. For clarity we remark that the free algebra $\Alg(\mathcal{C}) = \Alg(\mathcal{C}; \rhd)$, but we usually omit the operator from the notation in this case. We let $D(A)$ be the free tensor algebra over $A$ with the triangle product extended as in Equation (\ref{eq:Dalg1}) and (\ref{eq:Dalg2}). Note that we do not extend the bracket $[\![ \noarg , \noarg]\!]$ to $D(A)$. 
The first elements in the case $\mathcal{C}= \{\ab\}$ are
\[
\ab, \,\aabb, \, [\![ \ab, \ab ]\!], \, \aaabbb, \, \aababb, \, [\![ \ab, \ab ]\!] \rhd \ab ,\, \ab \rhd [\![ \ab , \ab ]\!],\, [\![ \aabb, \ab ]\!],\, [\![ \ab , \aabb ]\!],\, [\![ [\![ \ab, \ab ]\!], \ab ]\!] ,\, [\![ \ab , [\![ \ab , \ab ]\!] ]\!] ,\, \ldots 
\]
We might still think of the elements in the basis of $A$ as planar rooted trees and nested brackets of such, but with vertices that can be "colored" by either a bracket or an element from $\mathcal{C}$. For example, if we let $[\![ \ab , \ab ]\!] = \AB$, then $[\![ \ab, \ab ]\!] \rhd \ab = \aABb$ and $\ab \rhd [\![ \ab , \ab ]\!] = \AabB$. By this intuition, the general element in this basis of $A$ can be seen as a planar rooted tree $t$ with a function that map each vertex of $t$ to an element in $\mathcal{C} \cup \text{Brack}(A)$ where $\text{Brack}(A)$ is all elements on the form $[\![ x,y]\!]$ where $x$ and $y$ are in the basis of $A$. This will generate the basis $\hat{B}$ described below. We generalize the definition of $|\noarg|_V$ to these trees by letting $|[\![ x, y]\!] |_V = |x|_V + |y|_V$. Note that if we interpret $\aABb$ as a tree with the the vertex $\AB = [\![ \ab, \ab ]\!]$, this must be considered a weighted vertex, such that $|\AB|_V=|[\![ \ab, \ab ]\!]|_V=2$ and $|\aABb|_V = 3$.
\begin{remark}
    The number $\alpha_n$ of trees in $\OT(\mathcal{C})$ with $n$ vertices are given by the Catalan numbers multiplied by $|\mathcal{C}|^n$
    \[
    \alpha_n = \frac{|\mathcal{C}|^n}{n} \binom{2n-2}{n-1},
    \]
    where $|\mathcal{C}|$ is the number of elements in $\mathcal{C}$. This does of course correspond to the number of ways to select $n$ elements from $\mathcal{C}$ in a specific order, and then put parentheses that determine in which order to multiply by the product $\rhd$. When considering $A$, the only difference is that when putting a parenthesis, we must choose which operator to apply, between $\rhd$ and $[\![ \noarg, \noarg ]\!]$, hence the number $\beta_n$ of trees in $A$ with $n$ vertices is given by
    \[
    \beta_n =  \frac{2^{n-1}|\mathcal{C}|^n}{n} \binom{2n-2}{n-1}.
    \]
\end{remark}

We generalize the basis $\OT(\mathcal{C})$ of $\Alg(\mathcal{C})$ to a basis $\hat{B}$ of $A$:
\begin{enumerate}[(i)]
    \item if $c \in \mathcal{C}$, then $c \in \hat{B}$,
    \item if $x,y \in \hat{B}$, then $[\![ x,y ]\!] \in \hat{B}$, and 
    \item if $x_1 , \ldots ,x_n, r \in \hat{B}$ with $r \in \mathcal{C}$ or $r= [\![y,z]\!]$, then $(x_1 \cdots x_n ) \rhd r \in \hat{B}$.
\end{enumerate}

We can also generalize the basis $S$ of $\Alg(\mathcal{C})$ to a basis $\hat{S}$ of $A$:
\begin{enumerate}[(i)]
    \item if $c \in \mathcal{C}$, then $c \in \hat{S}$,
    \item if $x,y \in \hat{S}$, then $[\![ x,y ]\!] \in \hat{S}$, and 
    \item if $\omega \in D(A)$ is an OSBB-word of elements from $\hat{S}$ and $r \in \hat{S}$ with $r \in \mathcal{C}$ or $r= [\![y,z]\!]$, then $\omega \rhd r \in \hat{S}$,
\end{enumerate}
where we use the following order $\prec = \prec_{\hat{S}}$ on $\hat{S}$ when determining the OSBB-words:
\begin{itemize}
    \item If $|x|_V > |y|_V$, then $x \succ y$.
    \item If $|x|_V = |y|_V = 1$ and $x \succ_{\mathcal{C}} y$, then $x \succ y$. 
    \item If $|x|_V = |y|_V$, $x = [\![ u,v ]\!]$, $y= [\![ w,z]\!]$, then $x \succ y$ if
    \begin{itemize}
        \item $u \succ w$, or
        \item $u=w$ and $v \succ z$.
    \end{itemize}
    \item If $|x|_V = |y|_V$ and $x = \omega \rhd r$, $y= \eta \rhd s$, then $x \succ y$ if
    \begin{itemize}
        \item $|\omega|_V > |\eta|_V$, or
        \item $|\omega|_V = |\eta|_V$ and $\omega \succ_{\Delta} \eta$, or
        \item $\omega = \eta$ and $r \succ s$.
    \end{itemize} 
\end{itemize}

We list all the elements $x \in \hat{S}$ with $|x|_V\leq 4$ when $\mathcal{C} = \{\ab \}$ in increasing order where we let $\AB := [\![ \ab ,\ab ]\!]$ and simplify the expression into a planar rooted tree colored by $\{ \ab, \AB \}$ whenever possible:
\[
\begin{split}
    \textbf{1 vertex:}& \quad \ab, \\
    \textbf{2 vertices:}& \quad \AB, \,\, \aabb, \\
    \textbf{3 vertices:}& \quad [\![ \ab , \AB]\!], \,\, [\![ \ab , \aabb ]\!], \,\, [\![\AB , \ab ]\!], \,\, [\![ \aabb , \ab ]\!], \,\, \AabB, \,\, \aABb, \,\, \aaabbb, \,\, \aababb, \\
    \textbf{4 vertices:}& \quad [\![ \ab ,[\![ \ab , \AB]\!] ]\!], \,\, [\![ \ab , [\![ \ab , \aabb ]\!] ]\!], \,\, [\![ \ab , [\![\AB , \ab ]\!] ]\!], \,\, [\![ \ab , [\![ \aabb , \ab ]\!] ]\!], \,\, [\![ \ab , \AabB ]\!], \,\, [\![ \ab ,\aABb ]\!], \,\, [\![ \ab,  \aaabbb ]\!], \,\, [\![ \ab , \aababb]\!],\\
    & \quad  [\![ \AB , \AB]\!], \,\, [\![ \AB , \aabb]\!], \,\, [\![ \aabb, \AB ]\!], \,\, [\![ \aabb, \aabb]\!], \,\, [\![ [\![ \ab , \AB]\!] , \ab ]\!], \,\, [\![ [\![ \ab , \aabb ]\!] , \ab ]\!], \,\, [\![ [\![\AB , \ab ]\!] , \ab ]\!], \,\, [\![ [\![ \aabb , \ab ]\!] , \ab ]\!], \\
    & \quad [\![  \AabB , \ab ]\!], \,\, [\![ \aABb , \ab ]\!], \,\, [\![ \aaabbb , \ab ]\!], \,\, [\![ \aababb , \ab ]\!], \,\, \ab\rhd [\![ \ab , \AB]\!], \,\, \ab \rhd [\![ \ab, \aabb]\!], \,\, \ab \rhd [\![ \AB, \ab]\!], \,\, \ab \rhd [\![ \aabb, \ab ]\!], \\
    & \quad \AABB , \,\, \AaabbB, \,\, \AababB, \,\, [\![\ab , \AB]\!] \rhd \ab, \,\, [\![ \ab , \aabb ]\!] \rhd \ab, \,\, [\![ \AB, \ab ]\!] \rhd \ab, \,\, [\![ \aabb, \ab ]\!]  \rhd \ab, \,\, \aAabBb, \,\, \aaABbb, \,\, \aaaabbbb, \,\, \aaababbb, \\
    & \quad   [\AB, \ab]\rhd \ab, \,\, [\aabb, \ab] \rhd \ab, \,\,  \frak{s}(\AB \, \ab) \rhd \ab, \,\, \frak{s}(\aabb \, \ab) \rhd \ab, \,\, \aabababb.
\end{split}
\]

\subsection{Properties of the Basis $\hat{S}$}

While the binary operator $[\![ \noarg , \noarg ]\!]$ is kept visible in its most natural form in the elements of $\hat{S}$, the binary operator $\rhd$ is rewritten in the formalism of planar rooted trees. Therefore the element $x \rhd y$ is generally not an element of $\hat{S}$ even if $x$ and $y$ both are in $\hat{S}$. It will nevertheless be important for later proofs to have some knowledge of the element $x \rhd y$ without having to compute its representative in the basis $\hat{S}$. We note that any element in $\hat{S}$ is of the form $\omega \rhd r$ where $\omega$ is an OSBB-word of elements from $\hat{S}$ (potentially the empty word) and $r$ is either in $\mathcal{C}$ or on the form $[\![ u,v]\!]$ with $u,v \in \hat{S}$. 

For any $x \in \hat{S}$, we define $b(x)$ by
\begin{itemize}
    \item $b(c)=0$ for $c \in \mathcal{C}$,
    \item $b([\![ x,y ]\!])= 1 + b(x) + b(y)$,
    \item $b(\omega \rhd r) = b(\omega) + b(r)$ where $b(\omega) = b(\omega_1) + \ldots + b(\omega_k)$ for where $\omega_i$ are the blocks of $\omega$ and $b(\omega_i) = b(x_1) + \ldots + b(x_n) + b(y) + b(z)$ for $\omega_i = \frak{s}(x_1 \cdots x_n)\cdot[y,z]$. 
\end{itemize}
In simple words $b(x)$ counts the number of times the bracket $[\![ \noarg , \noarg ]\!]$ was used to generate $x$ from $\mathcal{C}$. A simple but important fact is that $b(x)$ is finite for any finitely generated element $x$, and in particular $b(x) < |x|_V$.

\begin{lemma}\label{lemma:S}
    Let $x,z \in \hat{S}$ with $z = \omega \rhd r$. Let $x\rhd z = \sum_i a_i (\eta_i \rhd s_i)$ where $a_i \in \mathbb{R}$ and $\eta_i \rhd s_i$ are in $\hat{S}$. Then
    \begin{enumerate}[(i)]
        \item $s_i = r$, 
        \item $\sup_i \{ l(\eta_i) \} = l(\omega) +1$, 
        \item $|\eta_i|_V = |x|_V + |\omega|_V$, and 
        \item $b(\eta_i \rhd r) = b(x) + b(z)$. 
    \end{enumerate}
\end{lemma}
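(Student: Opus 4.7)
The plan is to prove the lemma by strong induction on $|z|_V$. For the base case $|z|_V = 1$, the element $z$ must lie in $\mathcal{C}$, forcing $\omega = \mathbb{I}$ and $r = z$. Then $x \rhd z$ is already in $\hat{S}$ with $\eta = x$ and $s = r$, and each of (i)--(iv) reduces to a direct check ($l(x) = 1 = l(\mathbb{I}) + 1$, $|x|_V = |x|_V + 0$, $b(x) + b(r) = b(x) + b(z)$). The sub-case $\omega = \mathbb{I}$ with $|z|_V \geq 2$, where $z = r = [\![ u, v ]\!]$, is also immediate by the third generating clause for $\hat{S}$.

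For the inductive step with $\omega \neq \mathbb{I}$, I would invoke the two D-algebra relations \eqref{eq:Dalg1}--\eqref{eq:Dalg2} to split
\[
x \rhd z \;=\; x \rhd (\omega \rhd r) \;=\; (x \cdot \omega) \rhd r \,+\, (x \rhd \omega) \rhd r,
\]
and analyse the two summands separately. For the first summand, I would apply Theorem \ref{th:S} to rewrite $x \cdot \omega$ as a linear combination $\sum_j c_j \tilde{\omega}_j$ of OSBB-words; by Remark \ref{rem:OSBB}, each $\tilde{\omega}_j$ has $l(\tilde{\omega}_j) = l(\omega) + 1$ and its multiset of entries is exactly $\{x\}$ together with the entries of $\omega$, each occurring once. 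This immediately gives $|\tilde{\omega}_j|_V = |x|_V + |\omega|_V$ and $b(\tilde{\omega}_j) = b(x) + b(\omega)$, so each term $\tilde{\omega}_j \rhd r \in \hat{S}$ satisfies the required vertex and bracket count, contributes a root equal to $r$, and has length exactly $l(\omega) + 1$.

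For the second summand, I would use that $\rhd$ acts as a derivation on $D(A)$ and distribute through the blocks $\omega = \omega_1 \cdots \omega_k$: $x \rhd \omega = \sum_i \omega_1 \cdots (x \rhd \omega_i) \cdots \omega_k$. Each block $\omega_i$ is either $\mathfrak{s}(y_1 \cdots y_n)$ or $\mathfrak{s}(y_1 \cdots y_n)\cdot [u,v]$, and applying $x \rhd$ (again by the derivation property, together with $x \rhd [u,v] = [x\rhd u, v] + [u, x \rhd v]$) yields a sum of terms where exactly one entry $y_j$, $u$, or $v$ is replaced by $x \rhd (\text{entry})$. Since every such entry has strictly fewer vertices than $z$, the induction hypothesis expresses each $x \rhd y_j$ as a linear combination of elements $\eta \rhd r_j' \in \hat{S}$ with $|\eta \rhd r_j'|_V = |x|_V + |y_j|_V$ and $b(\eta \rhd r_j') = b(x) + b(y_j)$. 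Substituting back and then invoking Theorem \ref{th:S} once more to re-expand the resulting products of $\hat{S}$-elements into OSBB form, the total length of the block is preserved (and hence the total length of $x \rhd \omega$ remains $l(\omega)$), the vertex count totals to $|x|_V + |\omega|_V$, and the $b$-count totals to $b(x) + b(\omega)$. These contributions give $\eta_i \rhd r$ with $l(\eta_i) = l(\omega)$.

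Combining the two contributions yields (i), (iii), (iv) uniformly, and for (ii) the upper bound $l(\eta_i) \leq l(\omega) + 1$ is clear while the supremum is attained because the length-$(l(\omega)+1)$ terms arise only from the first summand: these terms cannot cancel against the second summand, whose OSBB-words all have strictly smaller length, and they cannot cancel among themselves by the uniqueness part of Theorem \ref{th:S} applied to the nonzero element $x \cdot \omega$. The main obstacle is the bookkeeping inside the second summand --- propagating the induction hypothesis through the symmetrization and the inner commutator $[u,v]$, and then verifying that the re-application of Theorem \ref{th:S} to the substituted product introduces no OSBB-word of length exceeding $l(\omega)$; this is what pins the supremum in (ii) to exactly $l(\omega)+1$ rather than something larger.
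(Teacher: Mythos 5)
Your proof is correct and follows essentially the same route as the paper's: the same splitting $x\rhd(\omega\rhd r) = (x\cdot\omega)\rhd r + (x\rhd\omega)\rhd r$ via \eqref{eq:Dalg1}--\eqref{eq:Dalg2}, the same appeal to Theorem \ref{th:S} and Remark \ref{rem:OSBB} for (i)--(iii), and the same induction on the vertex count to handle the derivation terms in (iv). Your explicit non-cancellation argument pinning the supremum in (ii) to exactly $l(\omega)+1$ is slightly more detailed than the paper's, but the substance is identical.
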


\begin{proof}
    See that
    \[
    x \rhd z = x \rhd (\omega \rhd r) = (x \cdot \omega) \rhd r + (x \rhd \omega) \rhd r.
    \]
    Notice that if we rewrite $x \cdot \omega + x \rhd \omega$ as a linear combination of OSBB-words $\sum_{i \in I} \eta_i$ we get $(x \cdot \omega) \rhd r + (x \rhd \omega) \rhd r = \sum_{i\in I} \eta_i \rhd r$, and (i) follows since $\eta_i \rhd r$ is in $\hat{S}$ by definition. Notice that $l(x \rhd \omega) = l(\omega)$ by Equation (\ref{eq:Dalg1}) in the definition of a D-algebra. By Remark \ref{rem:OSBB} $\eta_i$ must must have the same length as either $x \cdot \omega$ or $x \rhd \omega$, hence $\sup_i \{l(\eta_i) \} = l(x \cdot \omega ) = l(\omega) +1$ which proves (ii). By the same remark we know that each $\eta_i$ is composed of exactly all the elements that are present in $x$ and $\omega$, hence the number of vertices is preserved, which proves (iii). Lastly we note from the definition of $b(\noarg)$ it is clear that $b(( x \cdot \omega) \rhd r) = b(\eta_i \rhd r)$ for $i \in I_0 \subset I$ with $( x \cdot \omega) \rhd r = \sum_{i \in I_0} \eta_i \rhd r$. For the other term, we use equation (\ref{eq:Dalg1}) and get
    \[
    \begin{split}
        (x \rhd \omega) \rhd r &= \sum_j (\omega_1 \cdots (x \rhd \omega_j) \cdots \omega_k) \rhd r \\
        &= \sum_j \Big( \sum_s (\omega_1 \cdots \frak{s}(y_1 \cdots (x\rhd y_s) \cdots y_m)\cdot [u,v] \cdots \omega_k) \rhd r  \\
        & \quad \quad \quad \quad + (\omega_1 \cdots \frak{s}(y_1 \cdots  y_m)\cdot [x \rhd u,v] \cdots \omega_k) \rhd r \\
        & \quad \quad \quad \quad \quad  + (\omega_1 \cdots \frak{s}(y_1 \cdots  y_m)\cdot [u,x \rhd v] \cdots \omega_k) \rhd r \Big) \\
    \end{split}
    \]
    where $\omega_j = \frak{s}(y_1 \cdots y_m) \cdot [u,v]$. Notice that $b(c_1 \rhd c_2) = b(c_1) + b(c_2) = 0$ for $c_1, c_2 \in \mathcal{C}$. In other words (iv) is true for $|x \rhd z|_V = 2$. Since $|x \rhd y_j|_V$, $|x \rhd u|_V$ and $|x \rhd v|_V$ are all smaller than $|x\rhd (\omega \rhd r)|_V$, the statement follows by induction on $|\noarg |_V$.
\end{proof}

\begin{cor}\label{cor:S}
    Let $x,y,z \in \hat{S}$ where $z = \omega \rhd r$, and let $\eta$ be an OSBB-word over $\hat{S}$. If we rewrite $[x,y,z]$ and $\eta \rhd z$ in the basis $\hat{S}$, then
    \begin{enumerate}[(i)]
        \item all the terms will be on the form $\nu \rhd r$,
        \item for $[x,y,z]$ we get $\sup \{ l(\nu) \} = l(\omega) +2$ and for $\eta \rhd z$ we get $\sup_i \{ l(\nu) \} = l(\omega) + l(\eta)$,
        \item the number of vertices $|\noarg|_V$ will be preserved, and
        \item the number of brackets $b(\noarg)$ will be preserved.
    \end{enumerate}
\end{cor}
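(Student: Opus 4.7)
The plan is to reduce both claims to iterated applications of Lemma~\ref{lemma:S}, combined with the D-algebra identities (\ref{eq:Dalg1}) and (\ref{eq:Dalg2}) whenever a product of more than one element must act on a single right-hand factor.

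For the triple-bracket, expand via the associator:
\[
[x,y,z] = x \rhd (y \rhd z) - (x \rhd y) \rhd z - y \rhd (x \rhd z) + (y \rhd x) \rhd z.
\]
For the term $x \rhd (y \rhd z)$: apply Lemma~\ref{lemma:S} to $y \rhd z$ to get $\sum_i a_i (\eta_i \rhd r)$ with $\sup_i l(\eta_i) = l(\omega)+1$, then apply Lemma~\ref{lemma:S} to each $x \rhd (\eta_i \rhd r)$, producing terms $\nu \rhd r$ with $\sup l(\nu) = l(\omega)+2$. For $(x \rhd y) \rhd z$: first rewrite $x \rhd y$ in $\hat{S}$ via Lemma~\ref{lemma:S}---each summand being a single basis element of $\hat{S}$, hence of length $1$ in $D(A)$---and then apply Lemma~\ref{lemma:S} once more, yielding terms of length $\leq l(\omega)+1$. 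The remaining two summands are handled symmetrically, so the overall supremum is $l(\omega)+2$, realised by the leading contribution $([x,y]\cdot\omega)\rhd r$ coming from $x \rhd (y \rhd z) - y \rhd (x \rhd z)$.

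For $\eta \rhd z$, I would induct on $l(\eta)$, strengthening the hypothesis to arbitrary $\xi \in D(A)$: by Theorem~\ref{th:S} and Remark~\ref{rem:OSBB}, any such $\xi$ rewrites as a length-preserving combination of OSBB-words, so the general case follows from the OSBB case. The base $l(\eta)=1$ is precisely Lemma~\ref{lemma:S}. For $l(\eta)>1$, factor $\eta = \eta_1 \cdot \eta'$ with $\eta_1 \in A$ a single element and $l(\eta') = l(\eta)-1$, and invoke (\ref{eq:Dalg2}) in the rearranged form
\[
(\eta_1 \cdot \eta') \rhd z = \eta_1 \rhd (\eta' \rhd z) - (\eta_1 \rhd \eta') \rhd z.
\]
The first summand, by induction on $\eta' \rhd z$ followed by Lemma~\ref{lemma:S} for $\eta_1$, gives $\sup l(\nu) \leq l(\eta)+l(\omega)$, with equality attained by the leading term $(\eta\cdot\omega)\rhd r$. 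For the second, the Leibniz identity (\ref{eq:Dalg1}) shows $\eta_1 \rhd \eta'$ has length $l(\eta')=l(\eta)-1$, so induction gives $\sup l(\nu) \leq l(\eta)-1+l(\omega)$ there, strictly smaller.

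Properties (i), (iii), and (iv) propagate automatically along every chain: Lemma~\ref{lemma:S} preserves the root $r$, the vertex count $|\noarg|_V$, and the bracket count $b(\noarg)$ at each step, and since both counts are additive every final summand inherits the totals $|x|_V+|y|_V+|z|_V$ and $b(x)+b(y)+b(z)$ (respectively $|\eta|_V+|z|_V$ and $b(\eta)+b(z)$). The main obstacle is the bookkeeping for the induction on $\eta \rhd z$: one must verify that the Leibniz expansion of $\eta_1 \rhd \eta'$ through (\ref{eq:Dalg1}) produces only tensors of length $l(\eta')$, strictly less than $l(\eta)$, so that the induction hypothesis actually applies with a strict decrease and the length bound grows by at most one per outer multiplication.
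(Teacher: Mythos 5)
Your proposal is correct and follows essentially the same route as the paper: expand $[x,y,z]$ via Equation~(\ref{eq:TB}) and $\eta \rhd z$ via Equation~(\ref{eq:Dalg2}), then let Lemma~\ref{lemma:S} do all the work, with (i), (iii), (iv) propagating since each application preserves the root, $|\noarg|_V$, and $b(\noarg)$. You simply make explicit the induction on $l(\eta)$ and the length bookkeeping that the paper leaves implicit.
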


\begin{proof}
    The proof follows from rewriting $[x,y,z]$ and $\eta \rhd z$ with only the product $\rhd$ and elements from $\hat{S}$ by Equation (\ref{eq:TB}) and Equation (\ref{eq:Dalg2}) respectively, then everything follows from Lemma \ref{lemma:S}.
\end{proof}

\begin{lemma}\label{lemma:TB_largest_term}
    Let $x_1, \ldots , x_n,y,z,w \in \hat{S}$ with $w = \omega \rhd r$, then
    \[
    \frak{s}(x_1 \cdots x_n) \rhd [y,z,w] = (\frak{s}(x_1 \cdots x_n) \cdot [y,z] \cdot \omega) \rhd r + \sum_i \nu_i \rhd r
    \]
    where $l(\nu_i) < l(\frak{s}(x_1 \cdots x_n) \cdot [y,z] \cdot \omega)$ for all $i$. 
\end{lemma}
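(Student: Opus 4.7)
My plan is to reduce the identity to a single length-tracking computation in $D(\mathcal{A})$, via two simple observations. First, from \eqref{eq:TB} and \eqref{eq:Dalg2} one has $a_{\rhd}(y,z,w) = (y \cdot z) \rhd w$, so that $[y,z,w] = [y,z] \rhd w$, where $[y,z] := y \cdot z - z \cdot y$ is read as a length-2 element of $D(\mathcal{A})$. Second, $w = \omega \rhd r$ by hypothesis. Therefore
\[
\frak{s}(x_1 \cdots x_n) \rhd [y,z,w] = \frak{s}(x_1 \cdots x_n) \rhd \bigl([y,z] \rhd (\omega \rhd r)\bigr),
\]
and the whole lemma amounts to tracking the longest term (in the $T(\mathcal{A})$ factor immediately left of $\rhd r$) after iterated application of \eqref{eq:Dalg1} and \eqref{eq:Dalg2}.

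The engine is the following auxiliary identity, valid for any $\alpha, \eta \in T(\mathcal{A})$ and $r \in \mathcal{A}$:
\[
\alpha \rhd (\eta \rhd r) = (\alpha \cdot \eta) \rhd r + \sum_k \xi_k \rhd r, \qquad l(\xi_k) < l(\alpha) + l(\eta).
\]
I would prove this by induction on $l(\alpha)$. The base case $l(\alpha) = 1$, i.e.\ $\alpha = x$, is exactly \eqref{eq:Dalg2}: $x \rhd (\eta \rhd r) = (x \cdot \eta) \rhd r + (x \rhd \eta) \rhd r$, and the Leibniz rule \eqref{eq:Dalg1} preserves length, so $l(x \rhd \eta) = l(\eta) < 1 + l(\eta)$. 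For the inductive step, factor $\alpha = x \cdot \alpha'$ and apply \eqref{eq:Dalg2} with $\alpha'$ in the middle slot:
\[
\alpha \rhd (\eta \rhd r) = x \rhd \bigl(\alpha' \rhd (\eta \rhd r)\bigr) - (x \rhd \alpha') \rhd (\eta \rhd r).
\]
Applying the inductive hypothesis to both occurrences of the form $\beta \rhd (\eta \rhd r)$ (noting $l(x \rhd \alpha') = l(\alpha')$ by \eqref{eq:Dalg1}) and then absorbing the outermost $x$ through one more use of \eqref{eq:Dalg1} and \eqref{eq:Dalg2} produces exactly the leading $(\alpha \cdot \eta) \rhd r$; every other term that appears has left factor of length strictly less than $l(\alpha) + l(\eta)$.

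To finish, I apply the auxiliary identity twice. First, taking $\alpha = [y,z]$ and $\eta = \omega$ gives $[y,z,w] = ([y,z] \cdot \omega) \rhd r + \sum_k \xi_k \rhd r$ with $l(\xi_k) < 2 + l(\omega)$. Then I apply $\frak{s}(x_1 \cdots x_n) \rhd$ to both sides and use the auxiliary identity once more, this time on each monomial $x_{\sigma^{-1}(1)} \cdots x_{\sigma^{-1}(n)}$ in $\frak{s}(x_1 \cdots x_n)$ acting on $([y,z] \cdot \omega) \rhd r$; the leading contribution is $(x_{\sigma^{-1}(1)} \cdots x_{\sigma^{-1}(n)} \cdot [y,z] \cdot \omega) \rhd r$, and averaging over $\sigma$ reassembles $(\frak{s}(x_1 \cdots x_n) \cdot [y,z] \cdot \omega) \rhd r$. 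All remaining contributions (from the $\xi_k$ in the first step, and from the length-strict corrections in the second) yield terms $\nu_i \rhd r$ with $l(\nu_i) < n + 2 + l(\omega)$, which is the claimed bound. The main obstacle is the auxiliary identity itself: verifying that \emph{every} term produced by the cascade of applications of \eqref{eq:Dalg1} and \eqref{eq:Dalg2}, other than the single fully absorbed leading term, is strictly shorter. This rests on the length-preservation of \eqref{eq:Dalg1}, which ensures each single step of the induction only drops length.
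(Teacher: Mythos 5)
Your proposal is correct. The paper proves the lemma by induction on $n$: it peels one factor off the symmetrizer via $\frak{s}(x_1 \cdots x_k) = \sum_i x_i \cdot \frak{s}(x_1 \cdots \hat{x}_i \cdots x_k)$ and \eqref{eq:Dalg2}, and handles the base case $n=0$ by the identity $[y,z,\omega \rhd r] = ([y,z]\cdot \omega)\rhd r + [y,z,\omega]\rhd r$, which exploits that the Lie element $[y,z]$ acts as a derivation on $T(\mathcal{A})$. You instead isolate a single auxiliary identity, $\alpha \rhd (\eta \rhd r) = (\alpha \cdot \eta)\rhd r + (\text{strictly shorter left factors})\rhd r$ for arbitrary $\alpha \in T(\mathcal{A})$, proved by induction on $l(\alpha)$, and then apply it twice (once with $\alpha = [y,z]$, once with the monomials of $\frak{s}(x_1\cdots x_n)$). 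The underlying mechanism is the same in both arguments --- cascade \eqref{eq:Dalg1} and \eqref{eq:Dalg2} and use that the Leibniz action \eqref{eq:Dalg1} preserves length --- but your organization is more modular: you never need the precise form of the correction terms (in particular you avoid invoking the derivation property of $[y,z]$ on $T(\mathcal{A})$, needing only a length bound), and your auxiliary identity subsumes the content of Lemma \ref{lemma:S}(ii) and could be reused there. The only trade-off is that the paper's correction terms come out slightly more explicitly structured, which is immaterial for the stated conclusion.
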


\begin{proof}
    If $n=0$ we notice that $[y,z,\omega \rhd r] = ([y,z]\cdot \omega ) \rhd r + [y,z,\omega] \rhd r$ where $l([y,z,\omega]) = l(\omega) < l([y,z] \cdot \omega)$ by Equation (\ref{eq:Dalg1}). 
     Assume the lemma is true for $n<k$ and consider $\frak{s}(x_1 \cdots x_k) \rhd [y,z,w]$. We have
    \[
    \begin{split}
        \frak{s}(x_1 \cdots x_n) \rhd [y,z,w] &= \sum_{i=1}^k \Big( x_i \cdot \frak{s}(x_1 \cdots \hat{x}_i \cdots x_k) \Big) \rhd [y,z,w] \\
        &= \sum_{i=1}^k \Big( x_i \rhd (\frak{s}(x_1 \cdots \hat{x}_i \cdots x_k)  \rhd [y,z,w] ) \\
        & \quad \quad \quad \quad - ( x_i \rhd \frak{s}(x_1 \cdots \hat{x}_i \cdots x_k) ) \rhd [y,z,w] \Big). \\
    \end{split}
    \]
     By assumption and Equation (\ref{eq:Dalg1}) we have $\Big( x_i \rhd \frak{s}(x_1 \cdots \hat{x}_i \cdots x_k) \Big) \rhd [y,z,w] = \eta^i \rhd r + \sum_j \nu_j^i \rhd r$ where $l(\nu_j^i) < l(\eta^i) = l(\omega) + k+1$. Also by assumption, we have $\frak{s}(x_1 \cdots \hat{x}_i \cdots x_k)  \rhd [y,z,w] = (\frak{s}(x_1 \cdots \hat{x}_i \cdots x_k) \cdot [y,z] \cdot \omega ) \rhd r + \sum_j \mu^i_j \rhd r$ with $l(\mu^i_j) < l(\omega) + k +1$. The lemma follows from
     \[
     \sum_{i=1}^k x_i \cdot \frak{s}(x_1 \cdots \hat{x}_i \cdots x_n) \cdot [y,z] \cdot \omega = \frak{s}(x_1 \cdots x_n) \cdot [y,z] \cdot \omega
     \] 
     since $l(\frak{s}(x_1 \cdots x_n) \cdot [y,z] \cdot \omega) = l(\omega) + k + 2$ which is larger than the other terms. 
\end{proof}

\subsection{A Basis Generated from the Triple-Bracket}

\begin{definition}\label{def:T_basis}
    Let $\T \subset A$ be defined by
    \begin{enumerate}[(i)]
        \item if $c \in \mathcal{C}$, then $c \in \T$,
        \item if $u,v \in \T$, then $[\![ u,v]\!] \in \T$,
        \item if $x_1,\ldots x_n, r \in \T$ with $r \in \mathcal{C}$ or $r = [\![ u,v]\!]$, then $\frak{s}(x_1 \cdots x_n) \rhd r \in \T$,
        \item if $x_1 , \ldots , x_n , y,z,w \in \T$ with
        \begin{itemize}
            \item $y \succ z$, and
            \item $ x_i \succeq z$ for $1 \leq i \leq n$
        \end{itemize}
        then $\frak{s}(x_1 \cdots x_n) \rhd [y,z,w] \in \T$.
    \end{enumerate}
\end{definition}

Notice that in this definition, we do allow $n=0$, hence $[y,z,w] \in\T$ if $y \succ z$. 

\begin{proposition}\label{prop:T_basis}
    The set $\T$ is a basis of $A$. Let
    \[
    \begin{split}
        \phi: A & \longmapsto A \\
        c &\mapsto c \\
        [\![ u,v]\!] & \longmapsto [\![ \phi(u), \phi(v)]\!] \\
        \omega \rhd r & \longmapsto \frak{s}(\phi(x_1) \cdots \phi(x_n)) \rhd [\phi(y),\phi(z), \phi(\Bar{\omega} \rhd r)]
    \end{split}
    \]
    where $ \omega = \omega_1 \cdots \omega_k$ is an OSBB-word with $\omega_1 = \frak{s}(x_1 \cdots x_n)\cdot [y,z]$ and $\Bar{\omega}= \omega_2 \cdots \omega_k$. Then $\phi$ is an automorphism and $\phi|_{\hat{S}}$ is a bijection between $\hat{S}$ and $\T$.
\end{proposition}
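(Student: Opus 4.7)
The plan is to exhibit $\phi$ as a linear automorphism of $A$ that carries the known basis $\hat{S}$ bijectively onto $\T$; once this is done, $\T$ is automatically a basis. First I would verify by structural induction on $|\cdot|_V$ that $\phi$ maps $\hat{S}$ into $\T$: the three clauses of the recursive definition of $\phi$ mirror the clauses (ii)--(iv) of Definition \ref{def:T_basis}. For $\omega \rhd r \in \hat{S}$ with $\omega_1 = \frak{s}(x_1 \cdots x_n) \cdot [y,z]$, the image $\frak{s}(\phi(x_1) \cdots \phi(x_n)) \rhd [\phi(y), \phi(z), \phi(\bar{\omega} \rhd r)]$ is of the form prescribed by clause (iv); when $\omega$ consists only of a symmetric word (no bracket block), the image lies in $\T$ by clause (iii). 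The recursion terminates because $|\bar{\omega} \rhd r|_V < |\omega \rhd r|_V$.

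Next I would construct an explicit inverse $\psi : \T \to \hat{S}$ by a parallel recursion: a triple-bracket layer $\frak{s}(x_1 \cdots x_n) \rhd [y,z,w]$ with $\psi(w) = \omega' \rhd r$ is sent to $(\frak{s}(\psi(x_1) \cdots \psi(x_n)) \cdot [\psi(y),\psi(z)] \cdot \omega') \rhd r$, while leaves, brackets, and clause-(iii) elements are treated as under $\phi$ with the roles of $\hat{S}$ and $\T$ swapped. Verifying $\psi \circ \phi = \mathrm{id}$ and $\phi \circ \psi = \mathrm{id}$ on basis elements is then a simultaneous structural induction. This requires endowing $\T$ with the analogous order generated by the same lexicographic rules as $\prec_{\hat{S}}$; since $\phi$ preserves $|\cdot|_V$ and $b(\cdot)$ by Corollary \ref{cor:S}, an induction on $|\cdot|_V$ shows $\phi$ is order-preserving, so the OSBB-ordering conditions $x_i \succeq z$ and $y \succ z$ in $\hat{S}$ transport to the same conditions in $\T$ and back.

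Having the bijection $\phi|_{\hat{S}} : \hat{S} \to \T$, I would establish that $\phi$ extends to a linear automorphism of $A$ by proving a triangularity statement: for each $x \in \hat{S}$, the expansion of $\phi(x)$ in the basis $\hat{S}$ has $x$ itself as the unique summand of maximal total length $\sum_i l(\omega_i)$, with every other summand lying in the same graded piece for $(|\cdot|_V, b(\cdot))$. This follows by iterated application of Lemma \ref{lemma:TB_largest_term} to each triple-bracket layer produced by $\phi$: the leading term of $\frak{s}(\phi(x_i)) \rhd [\phi(y), \phi(z), \phi(\bar{\omega} \rhd r)]$ is, after expansion, exactly the OSBB-word obtained by prepending $\frak{s}(x_1 \cdots x_n) \cdot [y,z]$ to the leading term produced by the inductive expansion of $\phi(\bar{\omega} \rhd r)$, namely $\bar{\omega}$. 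Because each graded component is finite-dimensional, unitriangularity implies invertibility; so $\phi$ is an automorphism and $\T = \phi(\hat{S})$ is a basis of $A$.

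The principal obstacle is bookkeeping: aligning the two structural recursions layer by layer so that the inverse $\psi$ genuinely produces an OSBB-word of elements of $\hat{S}$, and carefully tracking that the total-length grading used for triangularity is preserved through the nested applications of Lemma \ref{lemma:TB_largest_term} as the recursion descends into $\bar{\omega} \rhd r$. The order-preservation argument in particular must be handled by induction on $|\cdot|_V$ before the bijection claim can be closed, since clause (iv) of Definition \ref{def:T_basis} is order-sensitive and the same inductive step is needed in both directions.
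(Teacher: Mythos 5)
Your overall architecture coincides with the paper's: show $\phi(\hat{S})\subseteq\T$ by structural induction on $|\cdot|_V$, exhibit explicit preimages for the elements of $\T$ (your map $\psi$ is exactly the paper's surjectivity step, which reconstructs $(\frak{s}(p_1\cdots p_n)\cdot[u,v]\cdot\nu)\rhd r$ as the preimage of $\frak{s}(x_1\cdots x_n)\rhd[y,z,w]$), and deduce that $\phi$ is a linear automorphism from a unitriangularity statement on the finite-dimensional graded components $V_i$, fed by Lemma \ref{lemma:TB_largest_term}. One small simplification available to you: the order on $\T$ is \emph{defined} in the paper as the transport of $\prec_{\hat{S}}$ along $\phi$, so the order-preservation you propose to establish by a separate induction is built in by fiat rather than something to prove.

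There is, however, one step that fails as literally stated: the claim that in the $\hat{S}$-expansion of $\phi(\omega\rhd r)$ the element $\omega\rhd r$ is the \emph{unique} summand of maximal total length. Writing $\phi(x_i)=x_i+\sum(\text{lower})$ and $\phi(\bar{\omega}\rhd r)=\bar{\omega}\rhd r+\sum_t\nu_t\rhd r_t$ with $\nu_t\rhd r_t\prec_{\hat{S}}\bar{\omega}\rhd r$, the cross terms $\frak{s}(x_1^{i_1}\cdots x_n^{i_n})\rhd[y_j,z_s,\nu_t\rhd r_t]$ have, by Lemma \ref{lemma:TB_largest_term}, leading OSBB-words $(\frak{s}(x_1^{i_1}\cdots x_n^{i_n})\cdot[y_j,z_s]\cdot\nu_t)\rhd r_t$ of length $n+2+l(\nu_t)$, and this can equal $n+2+l(\bar{\omega})$ because being $\prec_{\hat{S}}$-smaller does not force being shorter (the order $\prec_{\Delta}$ refines equal lengths lexicographically). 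So the length grading does not isolate $\omega\rhd r$ at the top, and triangularity with respect to length alone is not enough to invert $\phi$. The paper's Claim avoids this by proving $\phi(x)=x+\sum_i y_i$ with $x\succ_{\hat{S}}y_i$ in the \emph{full} order $\prec_{\hat{S}}$, using that every factor of a cross term is componentwise $\preceq$ the corresponding factor of $x$ with at least one strict inequality, together with the stability of the order under rewriting into OSBB form (\cite[Lemma 3.11]{LAT_MKS_2023}). Your argument closes once you replace the length grading by $\prec_{\hat{S}}$ in the triangularity statement; the rest of your plan then matches the paper's proof.
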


\begin{proof}
    The map $\phi$ is explicitly defined on the basis $\hat{S}$ and it is well defined.
    First, notice that the map $\phi$ preserves the number of vertices $| \noarg |_V$. Let $V_i$ be the graded component of $A$ consisting of all elements with exactly $i$ vertices; $V_i := \{ x \in A \, | \, |x|_V = i \}$. Clearly $\phi|_{V_1}$ is an automorphism of $V_1$. Assume $\phi|_{V_i}$ is an automorphism of $V_i$ for $i<k$ and let $x \in V_k$.
    
    \textbf{Claim:} $\phi(x) = x + \sum_i y_i$ where $x, y_i \in \hat{S}$ and $x \succ_{\hat{S}} y_i$.
    
    Notice that this claim is true for $z \in V_1$, and assume that it is true for $z \in V_i$ with $i<k$. Also assume that it is true for all $z \in V_k$ with $z \prec_{\hat{S}} x$. There are two cases to consider:
    \begin{itemize}
        \item If $x = [\![ u,v]\!]$ we have 
        \[
        \begin{split}
            \phi(x) &= [\![ \phi(u) , \phi(v)]\!] \\
            &= [\![u + \sum_i y_i, v + \sum_j z_j]\!] \quad \text{ (by assumtion)} \\
            &= x + \sum_{i,j} w_{i,j} 
        \end{split}
        \]
        where $w_{i,j} = [\![ y_i,z_j]\!]$ for $i,j>0$, $w_{0,0} = 0$, $w_{0,j} = [\![u,z_j]\!]$ and $w_{i,0} = [\![y_i, v]\!]$. It is clear by assumption that $x \succ_{\hat{S}} w_{i,j}$.
        \item If $x = \omega \rhd r = (\frak{s}(x_1 \cdots x_n) [y,z] \Bar{\omega}) \rhd r$ we have
        \[
        \begin{split}
            \phi(x) &= \frak{s}(\phi(x_1) \cdots \phi(x_n)) \rhd [\phi(y),\phi(z), \phi( \Bar{\omega} \rhd r)] \\
            &= \frak{s}(x_1 \cdots x_n) \rhd [y,z,  \Bar{\omega} \rhd r] + \sum_{i_1 , \ldots , i_n, j,s,t} \frak{s}(x_1^{i_1} \cdots x_n^{i_n}) \rhd [y_j,z_s,  \nu_t \rhd r_t] 
        \end{split}
        \]
        by assumption and linearity, where $x_l \succeq_{\hat{S}} x_l^{i_l}$, $y \succeq_{\hat{S}} y_j$, $z \succeq_{\hat{S}} z_s$ and $\Bar{\omega} \rhd r \succeq_{\hat{S}} \nu_t \rhd r_t$. By Lemma \ref{lemma:TB_largest_term} we have
        \[
        \begin{split}
            \frak{s}(x_1 \cdots x_n) \rhd [y,z,  \Bar{\omega} \rhd r] &= x + \sum_m \eta_m \rhd r \\
            \frak{s}(x_1^{i_1} \cdots x_n^{i_n}) \rhd [y_j,z_s,  \nu_t \rhd r_t] &= (\frak{s}(x_1^{i_1} \cdots x_n^{i_n}) \cdot [y_j, z_s] \cdot \nu_t) \rhd r_t + \sum_l \mu_l \rhd r_t
        \end{split}
        \]
        with $x \succ_{\hat{S}} \eta_m \rhd r$ and $(\frak{s}(x_1^{i_1} \cdots x_n^{i_n}) \cdot [y_j, z_s] \cdot \nu_t) \rhd r_t \succ_{\hat{S}} \mu_l \rhd r_t$. Since all the factors are pairwise bigger in $x$ than in $(\frak{s}(x_1^{i_1} \cdots x_n^{i_n}) \cdot [y_j, z_s] \cdot \nu_t) \rhd r_t$ we must have $x \succ_{\hat{S}} (\frak{s}(x_1^{i_1} \cdots x_n^{i_n}) \cdot [y_j, z_s] \cdot \nu_t) \rhd r_t$, even after rewriting $(\frak{s}(x_1^{i_1} \cdots x_n^{i_n}) \cdot [y_j, z_s] \cdot \nu_t) \rhd r_t$ in $\hat{S}$, see \cite[Lemma 3.11]{LAT_MKS_2023}.
    \end{itemize}
    This proves the claim, and from the claim, it follows that $\phi$ maps each graded component $V_i$ to itself as an upper triangular matrix with respect to the basis $\hat{S}$ where all the entries on the diagonal are $1$, hence the map has an inverse and must be an automorphism. 
    
    To prove that $\phi|_{\hat{S}}$ is a bijection onto $\T$, we again use induction on $|\noarg|_V$. This is clear for $V_1$, and assume it is the case for $y \in V_i$ with $i<k$. Let $x \in V_k$. If $x=[\![ u,v]\!]$ we have $\phi(x) = [\![ \phi(u), \phi(v)]\!]$ and by assumption $\phi(u)$ and $\phi(v)$ are both in $\T$. Then, by the definition of $\T$, $\phi(x) \in \T$. If $x = \omega \rhd r$ we have $\phi(x) =\frak{s}(\phi(x_1) \cdots \phi(x_n)) \rhd [\phi(y),\phi(z), \phi(\Bar{\omega} \rhd r)] $, and since all the arguments inside this term have fewer vertices than $x$ they are in $\T$ by assumption, hence $\phi(x)$ is in $\T$ by the definition of $\T$. This proves $\phi(\hat{S}) \subset \T$. Now let $x \in \T$. If $x = [\![ u,v]\!]$, then by assumption $u = \phi(y)$ for some $y \in \hat{S}$ and $v = \phi(z)$ for some $z \in \hat{S}$, hence $x = \phi([\![ y,z]\!])$. If $x = \frak{s}(x_1 \cdots x_n) \rhd [y,z, w]$ we have by assumption $x_i = \phi(p_i)$, $y = \phi(u)$, $z= \phi(v)$ and $w = \phi(q)$ for some $p_i, u,v,q \in \hat{S}$ with $q = \nu \rhd r$, hence $x = \phi((\frak{s}(p_1 \cdots p_n)\cdot [u,v]\cdot  \nu) \rhd r)$. By the claim above and definition of $\T$, $\frak{s}(p_1 \cdots p_n)\cdot [u,v]\cdot  \nu$ is an OSBB-word of elements from $\hat{S}$, hence $(\frak{s}(p_1 \cdots p_n)\cdot  [u,v]\cdot  \nu) \rhd r\in \hat{S}$ which proves that $\phi(\hat{S}) = \T$. Since $\phi$ is an automorphism, this also proves that $\T$ is a basis of $A$. 
\end{proof}

We define an order $\prec_{\T}$ by $x \succ_{\T} y$ if $x = \phi(u)$, $y = \phi(v)$ and $u \succ_{\hat{S}} v$. We will also extend the definition of $b(\noarg)$ to $\T$ by $b(\phi(u)) = b(u)$ for any $u\in \hat{S}$.

\section{The Free post-Lie-Yamaguti Algebra}\label{sec:5}

Let $\PLY(\mathcal{C})$ be the free post-Lie-Yamaguti algebra generated by the set $\mathcal{C}$. Let $\I \subset A$ be the ideal generated by the post-Lie-Yamaguti algebra relations (i)-(v) in Definition \ref{def:InvConAlg}, that is, the ideal generated by the relations:
\begin{align}
    [\![ x, y ]\!] &= - [\![ y,x]\!], \tag{PLY1} \label{PLY:1} \\
    u \rhd [\![ x,y ]\!] &=  [\![u \rhd x , y ]\!] + [\![ x, u\rhd y ]\!], \tag{PLY2} \label{PLY:2} \\
    u \rhd [x,y,z] &= [u \rhd x,y,z] + [x,u \rhd y,z] + [x,y,u \rhd z] \tag{PLY3} \label{PLY:3} \\
    & \quad \, + (u \cdot [\![ x,y ]\!] ) \rhd z - [\![ x,y ]\!] \rhd (u \rhd z),  \nonumber \\
    \sum_{ \circlearrowleft (x,y,z)} [\![ [\![ x,y ]\!] , z ]\!] &= \sum_{ \circlearrowleft (x,y,z)}  [x,y,z]- [\![x,y ]\!] \rhd z, \tag{PLY4} \label{PLY:4} \\
    \sum_{ \circlearrowleft (x,y,z)} [ [\![ x,y ]\!] ,z,u] &= \sum_{ \circlearrowleft (x,y,z)} [\![ [\![ x,y]\!] , z]\!] \rhd u. \tag{PLY5} \label{PLY:5}
\end{align}
By definition $\PLY(\mathcal{C}) \cong A/\I$. The goal of this section is to provide a basis for $\PLY(\mathcal{C})$. This basis is in part described by LTS Hall-elements which gives a basis for the free Lie triple system, a generalization of the regular Hall-elements used to describe free Lie algebras. 

\subsection{Hall Elements}

Given a totally ordered set $\mathcal{C}$, a Hall set over $\mathcal{C}$ is a subset of the free magma $M(\mathcal{C})$. In this case the free magma will be the set generated by $\mathcal{C}$ and a free magmatic product denoted by $[\![ \noarg, \noarg ]\!]$. In the case $\mathcal{C} = \{a,b\}$ we have
\[
M(\{a,b\}) = \{ a, b, [\![a,a]\!] , [\![a,b]\!], [\![ b,a]\!] , [\![ b,b]\!], [\![ a, [\![ a,a]\!] ]\!], [\![ a, [\![ a, b]\!] ]\!], \ldots \}. 
\]
We remark that the free magma is the natural basis of the free algebra $\Alg(\mathcal{C})$, and the set $\OT(\mathcal{C})$ is an alternative basis for the vector space spanned by the free magma. In this article we choose to use the planar rooted trees representation exclusively for elements generated by the triangle product $\rhd$, while we will use this representation of bracketed words whenever the elements are generated by a bracket $[\![ \noarg, \noarg ]\!]$. There is of course no difference other than notation between the triangle and the bracket when considered as a free binary operator, and this preferred choice has no other justification than convenience.

Let $W(\mathcal{C})$ denote the set of all \textit{words} over the \textit{alphabet} $\mathcal{C}$, or more precisely; the set generated by $\mathcal{C}$ and an associative product. There is a natural surjective map $f$ from $M(\mathcal{C})$ to $W(\mathcal{C})$ that forgets the brackets and leave the elements of $\mathcal{C}$ in the given order. Let the length of a word be given by $l(c_1 c_2 \ldots c_n) =n$. 
\begin{definition}\label{def:Hall_order}
Let $\prec_H$ be the order on $W(\mathcal{C})$ given by 
\begin{itemize}
    \item $x \succ_H y$ if $l(x) > l(y)$, or
    \item $x \succ_H y$ if $l(x)=l(y)$, $x= x_1 \ldots x_n$, $y= y_1 \ldots y_n$ with $x_i = y_i$ for $i<j$ and $x_j \succ_{\mathcal{C}} y_j$.
\end{itemize}
\end{definition}

\begin{definition}[Hall-element]\label{def:Hall_element}
An element $x \in M(\mathcal{C})$ is an Hall element if
\begin{enumerate}[(i)]
    \item $x\in \mathcal{C}$, or
    \item $x = [\![u,v]\!]$ with $u,v$ both Hall elements with $f(u)\succ_H f(v)$ and
    \begin{itemize}
        \item $u\in \mathcal{C}$, or
        \item $u = [\![ w,z]\!]$ with $f(z) \preceq_H f(v)$.
    \end{itemize}
\end{enumerate}
\end{definition}
If $x \in M(\mathcal{C})$ is a Hall element, then $f(x) \in W(\mathcal{C})$ is a Hall word. The Hall set is a basis for the free Lie algebra generated by $\mathcal{C}$, see for instance \cite{Reutenauer1993free}. 
   

\subsection{Hall Elements of Lie Triple Systems}

Lie triple systems are closely related to Lie algebras; for any Lie triple system there is a canonical embedding into a $\mathbb{Z}_2$-graded Lie algebra. This was originally proved by Jacobson in \cite{Jacobson1951} and later improved by Yamaguti in \cite{yamaguti1957}. We need a type of Hall-elements to represent a basis for the free Lie triple system $\LTS(\mathcal{C})$ over a set $\mathcal{C}$, similar to the Hall-elements of the free Lie algebra. Let $M_3(\mathcal{C})$ be the free magma generated by a ternary product $[\![ \noarg, \noarg, \noarg ]\!]$ and the set $\mathcal{C}$. 
As with the free magma $M(\mathcal{C})$, we can associate a surjective map $f_3$ from $M_3(\mathcal{C})$ to $W(\mathcal{C})$ by forgetting the triple-brackets and leaving the elements in the given order. On $W(\mathcal{C})$ we have a total order given in Definition \ref{def:Hall_order}.

\begin{definition}[LTS Hall-element]
An element $x \in M_3(\mathcal{C})$ is a LTS Hall-element if
\begin{enumerate}[(i)]
    \item $x \in \mathcal{C}$, or
    \item $x = [\![ y,z,w]\!]$ with $y,z,w$ all LTS Hall-elements such that $f_3(y) \succ_H f_3(z)$, and either
    \begin{itemize}
        \item $y \in \mathcal{C}$, or
        \item $ y = [\![ u,v,t]\!]$ with $f_3(t) \succeq_H f_3(z)$.
    \end{itemize}
\end{enumerate}
\end{definition}

\begin{proposition}\label{prop:LTS-Hall}
    The LTS Hall-elements provide a basis for $\LTS(\mathcal{C})$. Given three LTS Hall-elements $u,v,w$, the following algorithm will rewrite $[\![u,v,w]\!]$ as a sum of LTS Hall-elements:
    \begin{enumerate}
        \item If $u=v$: $[\![u,v,w ]\!] \mapsto 0$. 
        \item If $f_3(u) \prec_H f_3(v)$: $[\![u,v,w]\!]\mapsto -[\![v,u,w]\!]$.
        \item If $f_3(u) \succ_H f_3(v) \succ_H f_3(w)$:  $[\![u,v,w]\!] \mapsto [\![u,w,v]\!]-[\![v,w,u]\!]$. 
        \item For $[\![u,v,w]\!]$ where $f_3(u) \succ_H f_3(v) \preceq_H f_3(w)$,  $u=[\![a,b,c]\!]$ and $f_3(c) \succ_H f_3(v)$:
        \begin{equation}
        [\![ [\![a,b,c]\!] ,v,w]\!] \mapsto [\![a,b,[\![c,v,w]\!] ]\!] - [\![c,[\![a,b,v]\!],w]\!] - [\![c,v,[\![a,b,w]\!] ]\!]. \label{eq:rewrite2}
        \end{equation}
        Repeat the algorithm until all terms are LTS Hall-elements. 
    \end{enumerate}
\end{proposition}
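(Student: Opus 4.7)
The plan is to establish the proposition in three phases: verifying that each rewriting rule in the algorithm is a valid LTS identity, establishing termination of the algorithm (which yields the spanning statement), and proving linear independence by an embedding argument. For the rules, I would check: rule 1 is the specialization of \eqref{LTS1} to $u=v$; rule 2 is \eqref{LTS1} itself; rule 3 combines \eqref{LTS1} with \eqref{LTS2}, since from $[\![u,v,w]\!]+[\![v,w,u]\!]+[\![w,u,v]\!]=0$ and $[\![w,u,v]\!]=-[\![u,w,v]\!]$ one gets $[\![u,v,w]\!]=[\![u,w,v]\!]-[\![v,w,u]\!]$; and rule 4 is the derivation identity \eqref{LTS3} solved for the summand $[\![[\![u,v,x]\!],y,z]\!]$ and rebaptized by $(u,v,x,y,z)\mapsto(a,b,c,v,w)$, giving exactly \eqref{eq:rewrite2}.

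Termination is where I expect the main technical work to lie. I would equip the free magma $M_3(\mathcal{C})$ with a well-founded lexicographic measure built from (a) the total letter count $\ell(x)$, (b) the depth of the leftmost argument of the outermost triple, and (c) a tiebreaker that encodes the $\prec_H$-positions of $f_3(u)$, $f_3(v)$, $f_3(w)$. Rule 1 kills the term; rule 2 reduces the tiebreaker by bringing the larger element into the first slot; rule 3 replaces a strictly descending triple by two triples with strictly smaller tiebreaker configurations; and rule 4 produces three summands whose leftmost factors ($a$ and $c$) are strictly shallower than $[\![a,b,c]\!]$, so coordinate (b) strictly decreases. Standard well-founded induction then guarantees termination, and the output is by construction a linear combination of LTS Hall-elements; hence the LTS Hall-elements span $\LTS(\mathcal{C})$.

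For linear independence I would invoke the Jacobson--Yamaguti embedding of the free LTS into the odd component of the free $\mathbb{Z}_2$-graded Lie algebra generated by $\mathcal{C}$ (with $\mathcal{C}$ placed in odd degree), under which $[\![x,y,z]\!]$ is realized as the iterated Lie bracket $[[x,y],z]$. The LTS Hall conditions on $(y,z,w)$ and on the last factor $t$ of $y=[\![u,v,t]\!]$ mirror the classical Hall conditions on iterated Lie brackets, so the image of the LTS Hall set lies inside a linearly independent subset of the classical Hall basis of the free Lie algebra described in \cite{Reutenauer1993free}. Pulling this back through the embedding yields linear independence of the LTS Hall-elements in $\LTS(\mathcal{C})$, which combined with the spanning statement completes the proof. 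The delicate part here is verifying the precise matchup of Hall conditions between the ternary and binary settings; while I expect a clean correspondence, some care is required since a single triple bracket unfolds into two nested Lie brackets.
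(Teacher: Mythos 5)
The paper offers no proof of this proposition beyond the citation ``See \cite[Section 4]{LAT_MKS_2023}'', so your proposal is necessarily an independent argument rather than a variant of the paper's. Your verification that the four rules are consequences of \eqref{LTS1}--\eqref{LTS3} is correct (rule 3 from \eqref{LTS2} plus skew-symmetry, rule 4 from \eqref{LTS3} solved for $[\![[\![a,b,c]\!],v,w]\!]$). The first genuine gap is in termination: your lexicographic measure gives coordinate (b), the depth of the leftmost argument of the outermost triple, priority over the tiebreaker (c), but rules 2 and 3 can strictly \emph{increase} (b). Concretely, rule 4 produces the summand $[\![c,[\![a,b,v]\!],w]\!]$; whenever $f_3(c)\prec_H f_3([\![a,b,v]\!])$, rule 2 converts this to $-[\![[\![a,b,v]\!],c,w]\!]$, whose leftmost argument $[\![a,b,v]\!]$ is strictly deeper than the original leftmost argument $[\![a,b,c]\!]$ as soon as $v$ is deeper than $a$, $b$ and $c$. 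The measure therefore increases along this two-step chain and well-founded induction does not apply. A workable termination argument needs a Hall-style double induction --- on the total degree, and within a fixed degree a second induction keyed to the $\prec_H$-position of the second argument together with structural induction on the first --- rather than a single syntactic depth coordinate.

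The second gap is in the linear-independence step, which rests on a containment that is false. Under $[\![x,y,z]\!]\mapsto[[x,y],z]$ the LTS Hall conditions do not reproduce the classical Hall conditions: a classical Hall tree $[[y,z],w]$ requires $[y,z]\succ w$ in the length-first order, whereas the LTS Hall condition places no upper bound on $w$, so as soon as $l(f_3(w))>l(f_3(y))+l(f_3(z))$ the image $[[y,z],w]$ is \emph{not} a Hall tree. Hence the image of the LTS Hall set is not a subset of the classical Hall basis and independence cannot be read off by restriction. The embedding idea can be salvaged: the free Lie triple system is isomorphic to the odd part of the free Lie algebra on $\mathcal{C}$ (surjectivity of the canonical map by iterated Jacobi rewriting, injectivity via the standard enveloping Lie algebra of the free LTS), after which one proves independence by counting LTS Hall elements of each odd degree against the dimension of that graded component. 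But that is a dimension-count argument, not the containment argument you describe, and both the isomorphism and the count require proofs of their own.
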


\begin{proof}
    See \cite[Section 4]{LAT_MKS_2023}.
\end{proof}


Let $\T_0$ be a subset of the basis $\T$ in Definition \ref{def:T_basis} defined by
\[
\T_0 = \{ x \in \T \, | \, x \neq [u,v,w] \text{ for any } u,v,w \in \T \}.
\]
In other words, $\T_0$ consist of all the elements in $\T$ that is not a triple-bracket. Notice that elements on the form $\frak{s}(x_1 \cdots x_n) \rhd [u,v,w]$ is in $\T_0$ as long as $n>0$. Consider the set $M_3(\T_0)$ with a ternary operator denoted by $[\![ \noarg, \noarg, \noarg ]\!]$. There is an injection 
\[
\begin{split}
    \iota : \T &\longrightarrow M_3(\T_0) \\
    \T_0 \ni x &\longmapsto x \\
    [u,v,w] &\longmapsto [\![ \iota(u), \iota(v), \iota(w)]\!]
\end{split}
\]
hence we have another total order on $\T$, which we will also denote $\prec_H$ given by $x \prec_H y$ if $f_3(\iota(x)) \prec_H f_3(\iota(y))$. Here we use the restriction of $\prec_{\T}$ to $\T_0$ when comparing elements in $\T_0$. 

\subsection{A Basis for the Free PLY Algebra}

\begin{definition}
    Let $\mathcal{B} \subset \T$ be defined by
    \begin{itemize}
        \item if $c \in \mathcal{C}$, then $c\in \mathcal{B}$,
        \item if $x,y \in \mathcal{B}$ with $x \succ_{H} y$, then $[\![ x,y ]\!] \in \mathcal{B}$,
        \item if $x,y,z \in \mathcal{B}$ such that
        \begin{enumerate}[(i)]
            \item $[x,y,z]$ is an LTS-Hall-element,
            \item if $x = [\![ u,v]\!]$, then $v\prec_H y$,
            \item if $y = [\![ u,v]\!]$, then $v \prec_H x$,
        \end{enumerate}
        then $[x,y,z] \in \mathcal{B}$.
        \item If $x_1 , \ldots , x_n \in \mathcal{B}$, then $\frak{s}(x_1 \cdots x_n) \rhd c \in \mathcal{B}$. 
    \end{itemize}
\end{definition}

\begin{theorem}\label{th:main}
    $\mathcal{B}$ is a basis for $\PLY(\mathcal{C})$.
\end{theorem}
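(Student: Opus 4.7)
The plan is to prove Theorem \ref{th:main} in two steps: first that $\mathcal{B}$ spans $\PLY(\mathcal{C}) \cong A/\mathcal{I}$, then that $\mathcal{B}$ remains linearly independent in the quotient. Each of the defining relations \eqref{PLY:1}--\eqref{PLY:5} preserves the total vertex count $|\cdot|_V$, which suggests an induction on a lexicographic well-order on $A$ with $|\cdot|_V$ as primary key, refined by $b(\cdot)$ and by $\prec_H$ as secondary keys. Starting from the basis $\T$ of Proposition \ref{prop:T_basis}, it suffices to exhibit an explicit rewriting procedure that sends each element of $\T$ to a linear combination of elements of $\mathcal{B}$ modulo $\mathcal{I}$.

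Case by case, I would proceed as follows. A bracket $[\![x,y]\!]$ with $x \preceq_H y$ is normalized via \eqref{PLY:1}, either collapsing to zero or swapping arguments (strictly decreasing $\prec_H$). A term $\frak{s}(x_1 \cdots x_n)\rhd[\![u,v]\!]$ with $n\ge 1$ is processed by iterated use of \eqref{PLY:2}: each application pushes one factor of the outer $\rhd$-action inside the bracket, so after $n$ iterations the outer tree's root is again in $\mathcal{C}$ (via the inner brackets) while the remaining work is strictly smaller in the lexicographic order. A term $\frak{s}(x_1\cdots x_n)\rhd[y,z,w]$ with $n\geq 1$ is handled by \eqref{PLY:3}: the three Leibniz terms strictly decrease the length of the outer symmetric word, and the correction terms $(u\cdot[\![x,y]\!])\rhd z$ and $[\![x,y]\!]\rhd(u\rhd z)$ fall under already-treated shapes. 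A triple bracket $[x,y,z]$ violating the LTS Hall condition (i) is reduced by the algorithm of Proposition \ref{prop:LTS-Hall}, whose rules are consequences of the defining PLY relations (the triple-bracket derivation identity \eqref{PLY:6} and the cyclic sum, combined with skew-symmetry, yield exactly the LTS axioms on the triple bracket). Finally, a triple bracket $[x,y,z]$ whose first or middle slot is a $[\![\cdot,\cdot]\!]$ violating condition (ii) or (iii) is rewritten by solving the cyclic identity \eqref{PLY:5} for the offending term: the other two summands on the left have their inner bracket cyclically rotated to a position where the condition can be met (or further reduced by induction), while the right-hand-side $[\![\cdot,\cdot]\!]\rhd(\cdot)$ terms fall under the previously-handled shape.

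The main obstacle will be establishing termination and confluence of this composite rewriting system, since \eqref{PLY:2}, \eqref{PLY:3} and \eqref{PLY:5} each produce new terms that can in turn trigger further applications of one of the other rules. The termination argument requires showing that every rewriting move strictly decreases some component of the lexicographic tuple $(|\cdot|_V,\, b(\cdot),\, \prec_H)$, and the technical bookkeeping should lean heavily on Lemma \ref{lemma:S}, Corollary \ref{cor:S} and Lemma \ref{lemma:TB_largest_term} to control how these invariants propagate through the $\T$-representation; confluence can then be verified by inspecting the critical pairs between each pair of rules, mirroring the strategy of \cite{LAT_MKS_2023}. Once termination and confluence are in hand, the procedure defines a well-defined linear projection $A \to \mathrm{span}(\mathcal{B})$ that vanishes on $\mathcal{I}$ and is the identity on $\mathcal{B}$; this splits the quotient map $A \to A/\mathcal{I}$ and simultaneously exhibits $\mathcal{B}$ as a spanning and linearly independent set in $\PLY(\mathcal{C})$. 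As consistency checks, setting $[\![\noarg,\noarg]\!]\equiv 0$ should collapse $\mathcal{B}$ to the LAT basis of \cite{LAT_MKS_2023}, and the post-Lie specialization of Remark \ref{rmrk1} should recover the Hall basis for the free post-Lie algebra of \cite{MK_Lundervold_2013}.
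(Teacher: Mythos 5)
Your spanning argument follows essentially the same route as the paper: rewrite elements of $\T$ modulo $\I$ using \eqref{PLY:1}--\eqref{PLY:5} (plus the derived identity \eqref{PLY:6}), with termination controlled by induction on $|\cdot|_V$, by the bound $b(x)<|x|_V$ on the bracket count (note that $b$ \emph{increases} under steps involving \eqref{PLY:4}, \eqref{PLY:5} and \eqref{PLY:6}, so it is the bounded increase rather than a decrease that one exploits), and by the finiteness of elements below a given one in $\prec_H$. The case analysis you sketch matches the paper's seven rewriting steps closely, and Lemma \ref{lemma:S}, Corollary \ref{cor:S} and Lemma \ref{lemma:TB_largest_term} are indeed the right tools for the bookkeeping.

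The genuine gap is in the linear independence half. You reduce it to confluence of the composite rewriting system and propose to ``verify the critical pairs,'' but you never do so, and this is precisely the hard part: the seven rules interact (a \eqref{PLY:5}-rewrite can create terms that trigger \eqref{PLY:4}, which creates terms triggering \eqref{PLY:2}, etc.), and a full critical-pair analysis for this system is a substantial computation that your proposal defers entirely. Without it, the claimed projection $A\to\mathrm{span}(\mathcal{B})$ is not known to be well defined, so independence is not established. The paper avoids this machinery altogether: it argues directly that $\mathrm{span}(\mathcal{B})\cap\I=\{0\}$ by taking a minimal-vertex-count counterexample and checking, relation by relation, that every element generated from \eqref{PLY:1}--\eqref{PLY:5} necessarily contains a $\T$-basis term lying outside $\mathrm{span}(\mathcal{B})$ (e.g.\ both $[\![u,v]\!]$ and $[\![v,u]\!]$ lie in $\T$ but not both in $\mathcal{B}$; a cyclic sum of triple brackets always has one term violating the LTS-Hall condition; and the term $[[y,z,u],v,w]$ in \eqref{PLY:6} is never in $\mathcal{B}$ when $[y,z,[u,v,w]]$ is). You should either carry out the confluence check or replace it with a direct intersection argument of this kind. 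Finally, your closing consistency check is wrong: the paper explicitly states that $\mathcal{B}$ does \emph{not} specialize to the Hall basis of the free post-Lie algebra (only to the LAT basis under $[\![\noarg,\noarg]\!]\mapsto 0$), since the construction resolves \eqref{PLY:4} in favour of LTS-Hall triple brackets rather than Hall brackets.
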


\begin{proof}

First notice that $\mathcal{B} \subset \T$, hence the elements in $\mathcal{B}$ are all linearly independent in $A$. We will show that any element $x \in A$ can be rewritten as $x = x_1 +x_2$ where $x_1$ is in the span of $\mathcal{B}$ and $x_2$ is in the ideal $\I$. Without loss of generality we assume that $x \in \T$, and whenever convenient we let $x=\phi(\omega \rhd r)$ for some $\omega \rhd r \in \hat{S}$ where $\phi$ is the automorphism in Proposition \ref{prop:T_basis}. Below we give seven rewriting steps that show how any element $x$ can be written with one part in the span of $\mathcal{B}$ and the other part in $\I$. We use notation $x \mapsto x_1$ to mean that $x = x_1 +x_2$ with $x_2 \in \I$. 

If $|x|_V=1$, then $x \in \mathcal{B}$. Assume any $y \in \T$ with $|y|_V <k$ can be written as $y = y_1 + y_2$ with $y_1$ in the span of $\mathcal{B}$ and $y_2 \in \I$. Let $x \in \T$ with $|x|_V=k$.

\begin{enumerate}
    \item If $x = \phi(\omega \rhd [\![ u,v]\!])$, by Equation (\ref{eq:Dalg1}), (\ref{eq:Dalg2}) and (\ref{PLY:2}) we get
    \[
    \phi(\omega \rhd [\![ u,v]\!]) \longmapsto \sum_i [\![u_i, v_i]\!]. 
    \]
    Since $|u_i|_V<k$ and $|v_i|_V<k$, they can be written in the span of $\mathcal{B}$ added to something in $\I$ by assumption. 
    \item If $x = [\![ u,v]\!]$ with $u,v \in \mathcal{B}$ and $u \prec_H v$, by Equation (\ref{PLY:1}) we get
    \[
    [\![ u,v]\!] \longmapsto - [\![v,u]\!].
    \]
    In the case $u=v$, we get $[\![ u,v]\!] \mapsto 0$.
\end{enumerate}
After the first two steps, any $x = \phi(\omega \rhd r)$ with $r = [\![ u,v]\!]$ will be rewritten $x \mapsto x_1$ where $x_1$ is in the span of $\mathcal{B}$ and $x-x_1 \in \I$. We still need to give a rewriting for the elements on the form $x = \phi(\omega \rhd c)$ where $c\in \mathcal{C}$. If $y\in \mathcal{B}$, then $y \rhd c \in \mathcal{B}$. Assume that whenever $l(\eta) < l(\omega)$ we have a rewriting $\phi(\eta \rhd c) \mapsto y_1$ in the span of $\mathcal{B}$ with $\phi(\eta \rhd c) - y_1 \in \I$. 
\begin{enumerate}
    \setcounter{enumi}{2}
    \item If $x = \phi(\omega \rhd c) = \frak{s}(x_1 \cdots x_n) \rhd [u,v,w]$ where $x_1, \ldots ,x_n, u,v,w \in \mathcal{B}$, by Equation (\ref{eq:Dalg1}), (\ref{eq:Dalg2}) and (\ref{PLY:3}) we get
    \[
    \frak{s}(x_1 \cdots x_n) \rhd [u,v,w] \longmapsto \sum_i [u_i,v_i,w_i] + \sum_j \phi(\eta_j \rhd c)
    \]
    where $l(\eta_j)<l(\omega)$, hence these terms can be split into a part in the span of $\mathcal{B}$ and a part in $\I$ by assumption.
    \item If $x = [u,v,w]$ with $u,v,w \in \mathcal{B}$ and $u=v$, then $[u,v,w] \mapsto 0$ by the definition of the triple-bracket. If $u \prec_H v$, again by the definition of the triple-bracket we get
    \[
    [u,v,w] \longmapsto - [v,u,w].
    \]
    \item If $x = [u,v,w]$ with $u,v,w \in \mathcal{B}$ and $u \succ_H v \succ_H w$, by Equation (\ref{PLY:4}) we get
    \[
    [u,v,w] \longmapsto [u,w,v] - [v,w,u] + \sum_{\circlearrowleft (u,v,w)}[\![  [\![ u,v]\!] ,w]\!] + [\![u,v]\!] \rhd w.
    \]
    Here the last terms have increase in $b(\noarg)$ compared to $x$.
    \item If $x = [[y,z,u],v,w]$ with $y,z,u,v,w \in \mathcal{B}$ and $u \succ_H v$, by Equation (\ref{PLY:6}) we get
    \[
    \begin{split}
        [[y,z,u],v,w] \longmapsto &[y,z,[u,v,w]] - [u,[y,z,v],w] - [u,v,[y,z,w]] \\
        &- ([y,z]\cdot [\![u,v]\!]) \rhd w + [\![ u,v]\!] \rhd [y,z,w]
    \end{split} 
    \]
    The last terms have increase in $b(\noarg)$ compared to $x$.
\end{enumerate}
For any term $y$ with $b(y)=b(x)$, if $y$ is not LTS-Hall, repeat steps 4 to 6. By Proposition \ref{prop:LTS-Hall} this process must terminate with only LTS-Hall-elements and terms $z$ with $b(z)>b(x)$. 
\begin{enumerate}
    \setcounter{enumi}{6}
    \item If $x = [[\![ u,v]\!],z,w]$ with $u,v,z,w\in \mathcal{B}$ and $v \succ_H z$, by Equation (\ref{PLY:5}) we get
    \[
    \begin{split}
        [ [\![ u,v]\!] ,z ,w ] \longmapsto & [ [\![u,z]\!], v,w] - [ [\![v,z]\!],u,w ] + \sum_{\circlearrowleft (u,v,z) } [\![ [\![ u,v]\!] ,z]\!] \rhd w.
    \end{split}
    \]
    Similarly, if $x = [z, [\![ u,v]\!] , w]$ with $v \succ_H z$, by the same equation we get
    \[
    [z, [\![ u,v]\!] , w] \longmapsto  - [ [\![u,z]\!], v,w] + [ [\![v,z]\!],u,w ] - \sum_{\circlearrowleft (u,v,z) } [\![ [\![ u,v]\!] ,z]\!] \rhd w.
    \]
    The last term has increase in $b(\noarg)$ compared to $x$. The other two terms will have brackets $[\![ \noarg,z ]\!]$ where $z \prec_H v$, hence the last term of the bracket is strictly smaller by the order $\prec_H$.
\end{enumerate}
At this point, repeat the all the steps for all the terms that is not already in $\mathcal{B}$. Notice that these terms will either have $b(y)>b(x)$ or there will be a bracket $[\![u,v]\!]$ as a factor of $x$ that is replaced by a bracket $[\![ \noarg, z]\!]$ with $z \prec_H v$. Repeat all the steps for these elements. Since $b(x)$ has an upper bound at $|x|_V -1$ and there is only a finite number of elements $z$ satisfying $|z|_V<k$ and $z \prec_H v$ for a given $v$ in a factor $[\![u,v]\!]$ of $x$, the process must terminate. At that point we have rewritten $x$ as $x_1 +x_2$ with $x_1$ in the span of $\mathcal{B}$ and $x_2\in \I$, and by induction on $|x|_V$, this is true for any $x \in \T$. \\

We still need to prove that if $x \neq 0$ is in the span of $\mathcal{B}$, then $x$ is not in $\I$. Assume $x\neq 0$ is both in the span of $\mathcal{B}$ and in $\I$, and assume that there is no elements $y$ with $|y|_V <|x|_V$ satisfying this. Then $x$ must be on the form of one of the defining relations of $\I$. 
\begin{enumerate}[(i)]
    \item $x$ can't be generated from Equation (\ref{PLY:1}) since $[\![ u,v]\!] \in \T$ and $[\![ v,u]\!] \in \T$, but both can't be in the span of $\mathcal{B}$.
    \item $x$ can't be generated from Equation (\ref{PLY:2}) since $y \rhd [\![ u,v]\!] \in \T$ can't be in the span of $\mathcal{B}$.
    \item Equation (\ref{PLY:3}) is a bit more complex than the other cases. Certainly there can't be any terms on the form $y \rhd [u,v,w]$ in the span of $\mathcal{B}$, hence there are no elements directly generated from this equation in the span of $\mathcal{B}$. However, Equation (\ref{PLY:6}) is a direct consequence of Equation (\ref{PLY:3}), and elements on the form $[y,z,[u,v,w]]$ are contained in $\mathcal{B}$. If $[y,z,[u,v,w]]\in \mathcal{B}$ we have $u \succ_H v$, and then the term $[[y,z,u],v,w] \notin \mathcal{B}$. This is a term in Equation (\ref{PLY:6}), hence $x$ can't be generated from Equation (\ref{PLY:3}). Note that $y \rhd [u,v,w] \in \T$ and $[[y,z,u],v,w] \in \T$.
    \item $x$ can't be generated from Equation (\ref{PLY:4}) since the cyclic sum of $[u,v,w]$ are all elements in the basis $\T$ after adjusting for skew-symmetry, but only two of those can be in the span of $\mathcal{B}$.
    \item $x$ can't be generated from Equation (\ref{PLY:5}) since the cyclic sum \\
    $\sum_{ \circlearrowleft (u,v,z) } [[\![u,v]\!] ,z,w]$ are all elements of $\T$ after adjusting for skew-symmetry in the triple-bracket, but one of these terms can't be in the span of $\mathcal{B}$.
\end{enumerate}
Since any element we are able to generate in the ideal $\I$ necessarily contains a term in the basis $\T$ that is not contained in the span of $\mathcal{B}$, we conclude that the intersection between the span of $\mathcal{B}$ and $\I$ is $\{ 0\}$. Thus the span of $\mathcal{B}$ is equal to $A/\I \cong \PLY(\mathcal{C})$, and since $\mathcal{B}$ is a linearly independent set it follows that $\mathcal{B}$ is a basis for the free post-Lie-Yamaguti algebra.

\end{proof}

We list all the basis elements of $\mathcal{B}\subset \PLY(\{\ab\})$ with $|x|_V\leq 5$ in increasing order with respect to $\prec_{\T}$. 
\[
\begin{split}
    \textbf{1 vertex:}& \quad \ab, \\
    \textbf{2 vertices:}& \quad \aabb, \\
    \textbf{3 vertices:}& \quad [\![ \aabb , \ab ]\!], \,\, \aaabbb, \,\, \aababb, \\
    \textbf{4 vertices:}& \quad   [\![ [\![ \aabb , \ab ]\!] , \ab ]\!], \,\, [\![ \aaabbb , \ab ]\!], \,\, [\![ \aababb , \ab ]\!], \,\, [\![ \aabb, \ab ]\!] \rhd \ab, \,\, \aaaabbbb, \,\, \aaababbb, \,\, [\aabb, \ab ,  \ab], \,\, \frak{s}(\aabb \, \ab) \rhd \ab, \,\,  \aabababb, \\
    \textbf{5 vertices:}& \quad  [\![ [\![ [\![ \aabb , \ab ]\!], \ab ]\!] , \ab ]\!], \,\, [\![ [\![ \aabb , \ab ]\!], \aabb]\!] , \,\, [\![ [\![ \aaabbb , \ab ]\!] , \ab ]\!] , \,\, [\![ [\![ \aababb , \ab ]\!] , \ab ]\!] , \,\, [\![ \aaabbb, \aabb ]\!] , \,\, [\![ \aababb , \aabb ]\!] , \\ %
    & \quad [\![  [\![ \aabb , \ab ]\!]  \rhd \ab , \ab ]\!] , \,\, [\![ \aaaabbbb , \ab ]\!] , \,\, [\![ \aaababbb, \ab ]\!] , \,\, [\![ [\aabb, \ab , \ab] , \ab ]\!] , \,\, [\![ \frak{s}(\aabb \, \ab ) \rhd \ab , \ab ]\!] , \,\, [\![ \aabababb, \ab ]\!] , \\ %
    & \quad  [\![ [\![ \aabb , \ab ]\!] , \ab ]\!]  \rhd \ab, \,\, [\![ \aaabbb, \ab ]\!]  \rhd \ab , \,\, [\![ \aababb, \ab ]\!]  \rhd \ab , \,\, ( [\![ \aabb , \ab ]\!]  \rhd \ab ) \rhd \ab , \,\,  \aaaabbbb  \rhd \ab , \,\, \aaababbb \rhd \ab , \\ %
    & \quad  [\aabb , \ab , \ab ] \rhd \ab , \,\, ( \frak{s}(\aabb \, \ab ) \rhd \ab ) \rhd \ab, \,\,  \aabababb  \rhd \ab , \,\, 
    [ [\![ \aabb, \ab ]\!] , \ab , \ab] , \,\, [\aaabbb, \ab ,\ab ], \,\, [\aababb, \ab ,\ab] ,\,\,  \\ %
    & \quad  (\aabb \, \aabb ) \rhd \ab , \,\, \frak{s}([\![ \aabb, \ab ]\!] \, \ab ) \rhd \ab , \,\, \frak{s}(\aaabbb \, \ab ) \rhd \ab , \,\, \frak{s}(\aababb \, \ab ) \rhd \ab , \,\,  [\aabb, \ab ,\aabb], \,\,  \frak{s}(\aabb \, \ab \, \ab) \rhd \ab ,  \\
    \vspace{0cm}\\
    & \quad \aababababb.
 \end{split}
\]

\subsection{Relations to the basis of $\LAT(\mathcal{C})$}
    In \cite{LAT_MKS_2023} a basis $\mathcal{B}_{\LAT}$ for the free Lie admissible triple algebra is given by
    \begin{itemize}
        \item if $c \in \mathcal{C}$, then $c \in \mathcal{B}_{\LAT}$,
        \item if $x_1 , \ldots , x_n \in \mathcal{B}_{\LAT}$, then $\frak{s}(x_1 \cdots x_n) \rhd c \in \mathcal{B}_{\LAT}$, and
        \item if $x,y,z \in \mathcal{B}_{\LAT}$ and $[x,y,z]$ is an LTS Hall-element, then $[x,y,z] \in \mathcal{B}_{\LAT}$. 
    \end{itemize}
    If we let $[\![ \noarg , \noarg ]\!] =0$ in $\mathcal{B}$, the remaining elements form exactly $\mathcal{B}_{\LAT}$, hence the basis $\mathcal{B}$ is a natural generalization of $\mathcal{B}_{\LAT}$. More precisely; the following diagram commutes
    \[
    \begin{tikzcd}
    \Alg(\mathcal{C}; [\![ \noarg, \noarg ]\!] , \rhd) \arrow{r}{[\![\noarg,\noarg]\!] \mapsto 0} \arrow[swap]{d}{\pi_{\mathcal{B}}} & \Alg(\mathcal{C}; \rhd) \arrow{d}{\pi_{\mathcal{B}_{\LAT}}} \\%
    \PLY(\mathcal{C}) \arrow{r}{[\![\noarg,\noarg]\!] \mapsto 0}& \LAT(\mathcal{C})
    \end{tikzcd}
    \]
    where $\Alg(\mathcal{C}; [\![ \noarg, \noarg ]\!] , \rhd)$ is the free algebra over two binary operations, $\Alg(\mathcal{C}; \rhd)$ is the free algebra over one binary operation, $\pi_{\mathcal{B}}$ is the projection to the span or $\mathcal{B}$ and $\pi_{\mathcal{B}_{\LAT}}$ is the projection to the span of $\mathcal{B}_{\LAT}$.

    It should be noted that choices were made when constructing the basis $\mathcal{B}$ that made it fit with the basis of the free LAT. The basis $\mathcal{B}$ does not reflect the basis of the free post-Lie algebra in the same way. In particular in step 5 the rewriting algorithm in the proof of Theorem \ref{th:main} we chose to interpret Equation (\ref{PLY:4}) as a weaker Jacobi identity on the triple-bracket $[x,y,z]$, and thus making sure the elements in the basis are always LTS Hall-elements. If we instead interpreted this equation as a weaker Jacobi identity on $[\![ [\![ x,y]\!] ,z]\!]$, we could potentially rewrite all the terms in the basis such that every bracket $[\![x,y]\!]$ is a Hall-element. In this case the resulting basis should generalize the basis of the free post-Lie algebra with a natural projection given by $[\![ x,y]\!] \rhd z \mapsto [x,y,z]$. We were not able to provide such a basis in this article because it was more complicated to show that the rewriting algorithms would terminate.

\section{Lie-Yamaguti Algebras}\label{sec:6}

Lie-Yamaguti algebras are an algebraic structure that appears in the vector fields on reductive homogeneous spaces. Thus it is expected that this structure is related to post-Lie-Yamaguti algebras. This relation will be made precise in Proposition \ref{prop:PLY_LY}

\begin{definition}[Lie-Yamaguti]
    Let $\mathcal{A}$ be an algebra with a binary operation $\circ$ and a ternary operation $\{ \noarg, \noarg, \noarg \}$. $(\mathcal{A}, \circ, \{ \noarg, \noarg, \noarg \})$ is called a Lie-Yamaguti algebra if it satisfies
    \begin{align}
        x\circ x &= 0, \tag{LY1} \label{LY1} \\
        \{x,x,y\} &= 0, \tag{LY2} \label{LY2} \\
        \sum_{ \circlearrowleft (x,y,z)} \{x,y,z\} + (x\circ y) \circ z &= 0, \tag{LY3} \label{LY3}\\
        \sum_{\circlearrowleft (x,y,z) } \{ x \circ y,z,w \} &= 0, \tag{LY4} \label{LY4} \\
        \{ x,y,z\circ w \} &= \{x,y,z \} \circ w + z \circ \{x,y,w\}, \tag{LY5} \label{LY5} \\
        \{x,y,\{u,v,w\} \} &= \{ \{x,y,u\} ,v,w\} + \{u, \{x,y,v\}, w\} + \{ u,v,\{x,y,w\} \}. \tag{LY6} \label{LY6}
    \end{align}
\end{definition}

Lie-Yamaguti algebras were   first introduced by Yamaguti in \cite{Yamaguti1958} under the name \textit{generalized Lie triple systems}. In that article it Yamaguti shows that if $(M,\nabla)$ is a reductive homogeneous space with its canonical connection, then $(\mathfrak{X}_M, -T, -R)$ is a Lie-Yamaguti algebra, where $T$ and $R$ is the torsion and curvature. Lie-Yamaguti algebras have been studied in relation to reductive homogeneous spaces in for instance \cite{kikkawa1975geometry,Kinyon_Weinstein_2001,benito2009irreducible}. 

\begin{proposition}\label{prop:PLY_LY}
    Let $(\mathcal{A}, [\![ \noarg, \noarg ]\!], \rhd )$ be a post-Lie-Yamaguti algebra. Define \\
    $\{x,y,z \} := [\![ x,y ]\!] \rhd z - [x,y,z]$. Then $(\mathcal{A}, [\![ \noarg, \noarg ]\!] , \{ \noarg, \noarg, \noarg \})$ is a Lie-Yamaguti algebra.
\end{proposition}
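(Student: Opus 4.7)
The plan is to verify the six Lie-Yamaguti axioms \eqref{LY1}--\eqref{LY6} in turn, using $\circ = [\![\cdot,\cdot]\!]$ and $\{x,y,z\} = [\![x,y]\!]\rhd z - [x,y,z]$. The first four are essentially bookkeeping. Axiom \eqref{LY1} is PLY condition (i). For \eqref{LY2}, the first summand vanishes by skew-symmetry of $[\![\cdot,\cdot]\!]$ and the second vanishes from the defining formula \eqref{eq:TB} of the triple-bracket. Substituting the definition of $\{\cdot,\cdot,\cdot\}$ into \eqref{LY3} and \eqref{LY4} and rearranging the cyclic sums transforms them into PLY conditions (iv) and (v), respectively.

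For \eqref{LY5} I would exploit the observation that PLY condition (ii) says left-multiplication $L_P(Z) := P \rhd Z$ is a derivation of $[\![\cdot,\cdot]\!]$ for every $P \in \mathcal{A}$. Writing $[X,Y,\cdot] = [L_X,L_Y] - L_{X \rhd Y - Y \rhd X}$ exhibits the triple-bracket action as a linear combination of a commutator of derivations and a derivation, hence $[X,Y,\cdot]$ is itself a derivation of $[\![\cdot,\cdot]\!]$. Consequently $\{X,Y,\cdot\} = L_{[\![X,Y]\!]} - [X,Y,\cdot]$ is a derivation of $[\![\cdot,\cdot]\!]$, which is exactly \eqref{LY5}.

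The delicate axiom is \eqref{LY6}, and my strategy rests on two intermediate identities. First, applying PLY condition (iii) with $w = [\![A,B]\!]$ and subtracting the derived identity \eqref{PLY:6} yields
\begin{equation*}
\{A,B,[X,Y,Z]\} = [\{A,B,X\},Y,Z] + [X,\{A,B,Y\},Z] + [X,Y,\{A,B,Z\}] + \bigl(([\![A,B]\!] - [A,B]) \cdot [\![X,Y]\!]\bigr) \rhd Z - [\![X,Y]\!] \rhd \{A,B,Z\},
\end{equation*}
where $[A,B] = A \cdot B - B \cdot A$ denotes the D-algebra commutator. Second, expanding $[\![A,B]\!] \rhd (P \rhd Q)$ and $[A,B,P \rhd Q] = [A,B] \rhd (P \rhd Q)$ via \eqref{eq:Dalg1}--\eqref{eq:Dalg2} produces
\begin{equation*}
\{A,B,P \rhd Q\} = \bigl(([\![A,B]\!] - [A,B]) \cdot P\bigr) \rhd Q + \{A,B,P\} \rhd Q.
\end{equation*}

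To finish, I would compute $\{A,B,\{U,V,W\}\} = \{A,B,[\![U,V]\!] \rhd W\} - \{A,B,[U,V,W]\}$. Applying the second identity with $P = [\![U,V]\!]$, $Q = W$, then substituting the \eqref{LY5}-expansion of $\{A,B,[\![U,V]\!]\}$, handles the first summand and produces the correction $\bigl(([\![A,B]\!] - [A,B]) \cdot [\![U,V]\!]\bigr) \rhd W$. Applying the first identity with $X = U$, $Y = V$, $Z = W$ handles the second summand and produces the same correction, together with a term $[\![U,V]\!] \rhd \{A,B,W\}$. The two occurrences of the correction cancel, and the pair $[\![U,V]\!] \rhd \{A,B,W\} - [U,V,\{A,B,W\}]$ assembles into $\{U,V,\{A,B,W\}\}$, leaving exactly $\{\{A,B,U\},V,W\} + \{U,\{A,B,V\},W\} + \{U,V,\{A,B,W\}\}$. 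The main obstacle is the bookkeeping of these correction terms; the conceptual point is that the mismatch between the algebra bracket $[\![A,B]\!]$ appearing in PLY(iii) and the D-algebra commutator $[A,B]$ appearing in \eqref{PLY:6} is precisely what is absorbed uniformly into the definition of $\{A,B,\cdot\}$, so the anomaly drops out of the difference.
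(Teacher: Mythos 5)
Your proof is correct, and at the top level it is the same strategy as the paper's: verify the six Lie--Yamaguti axioms directly from the PLY axioms, with \eqref{LY1}--\eqref{LY4} being immediate consequences of \eqref{PLY:1}, the definition \eqref{eq:TB}, \eqref{PLY:4} and \eqref{PLY:5} respectively. The two places where you diverge are worth noting. For \eqref{LY5}, the paper computes directly and defers the fact that $[x,y,\noarg]$ is a derivation of $[\![\noarg,\noarg]\!]$ to an unstated ``straightforward computation''; your observation that $[x,y,\noarg]=[L_x,L_y]-L_{x\rhd y-y\rhd x}$ with each $L_p$ a derivation by \eqref{PLY:2} supplies that step conceptually. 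For \eqref{LY6}, the paper performs one long expansion using \eqref{PLY:3} and \eqref{PLY:6} and reassembles terms column by column, whereas you factor the computation through two auxiliary identities: the covariance of $\{A,B,\noarg\}$ on triple brackets (which is exactly \eqref{PLY:3} with $w=[\![A,B]\!]$ minus \eqref{PLY:6}) and the Leibniz-type rule $\{A,B,P\rhd Q\}=\bigl(([\![A,B]\!]-[A,B])\cdot P\bigr)\rhd Q+\{A,B,P\}\rhd Q$ in $D(\mathcal{A})$. I checked the latter: antisymmetrizing $(A\cdot B)\rhd(P\rhd Q)$ in $A,B$ kills the cross terms $B\cdot(A\rhd P)$ and $A\cdot(B\rhd P)$, giving $[A,B]\rhd(P\rhd Q)=([A,B]\cdot P)\rhd Q+[A,B,P]\rhd Q$, from which the rule follows. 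Your organization makes visible why the anomaly $\bigl(([\![A,B]\!]-[A,B])\cdot[\![U,V]\!]\bigr)\rhd W$ cancels between the two summands of $\{A,B,\{U,V,W\}\}$ --- the same cancellation that happens silently inside the paper's single display --- at the cost of stating two lemmas the paper does not isolate. Both arguments are complete.
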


\begin{proof}
We verify the six defining properties of Lie-Yamaguti algebras.
\begin{itemize}
    \item \eqref{LY1} follows immediately from \eqref{PLY:1}.
    \item \eqref{LY2} follows from the definition of the triple-bracket in addition to the skew-symmetry of $[\![ \noarg,\noarg]\!]$.
    \item \eqref{LY3} follows from \eqref{PLY:4}:
    \[
    \begin{split}
        \sum_{ \circlearrowleft (x,y,z)} \{x,y,z\} + [\![ [\![x,y]\!] ,z ]\!] &= \sum_{ \circlearrowleft (x,y,z)}  [\![x,y]\!] \rhd z - [x,y,z] + [\![ [\![x,y]\!] ,z ]\!] =0.
    \end{split}
    \]
    \item \eqref{LY4} follows from \eqref{PLY:5}:
    \[
    \sum_{\circlearrowleft (x,y,z) } \{[\![x,y]\!],z,w \} = \sum_{\circlearrowleft (x,y,z) } [\![ [\![ x,y]\!] z]\!] \rhd w -  [[\![x,y]\!],z,w ] = 0.
    \]
    \item \eqref{LY5} follows from \eqref{PLY:2}:
    \[
    \begin{split}
        \{ x,y,z\circ w \} &= [\![x,y]\!] \rhd [\![z,w]\!] - [x,y,[\![z,w]\!] ] \\
        &= [\![ [\![x,y]\!] \rhd z ,w]\!] + [\![ z, [\![x,y]\!] \rhd w]\!] - [\![ [x,y,z] , w]\!] - [\![ z,[x,y,w]]\!] \\
        &= [\![ \{x,y,z \} , w ]\!] + [\![ z, \{x,y,w\} ]\!],
    \end{split}
    \]
    where the fact that $[x,y, \noarg]$ acts as a derivation on $[\![ z,w]\!]$ follows from a straight forward computation using \eqref{PLY:2} and the definition of the triple-bracket \eqref{eq:TB}.
    \item For \eqref{LY6} we use \eqref{PLY:3} and \eqref{PLY:6} in a lengthy computation:
    \[
    \begin{split}
        \{ x,y, \{ u,v,w\} \} &= [\![ x,y ]\!] \rhd ([\![u,v]\!] \rhd w) - [\![ x,y ]\!] \rhd  [u,v,w] \\
        & \quad - [ x,y, [\![ u,v]\!] \rhd w] + [x,y,[u,v,w]] \\
        &= ([\![ x,y ]\!] \cdot [\![u,v]\!]) \rhd w + [\![ [\![ x,y]\!] \rhd u , v]\!] \rhd w \\
        & \quad + [\![ u, [\![ x,y]\!] \rhd v]\!] \rhd w - [ [\![ x,y]\!] \rhd u, v,w ] \\
        & \quad - [u , [\![x,y]\!] \rhd v, w] - [u,v,[\![ x,y]\!] \rhd w] \\
        & \quad - ([\![ x,y]\!] \cdot [\![ u,v]\!] ) \rhd w + [\![ u,v]\!] \rhd ( [\![ x,y]\!] \rhd w) \\
        & \quad - [ x,y, [\![ u,v]\!] \rhd w] + [ [x,y,u] , v,w] \\
        & \quad + [u, [x,y,v],w] + [u,v,[x,y,w]] \\
        & \quad + ([x,y] \cdot [\![ u,v]\!] ) \rhd w - [\![ u,v]\!] \rhd [x,y,w] \\
        &= [\![ [\![ x,y]\!] \rhd u , v]\!] \rhd w + [\![ u, [\![ x,y]\!] \rhd v]\!] \rhd w + [\![ u,v]\!] \rhd ( [\![ x,y]\!] \rhd w) \\
        & \quad - [ [\![ x,y]\!] \rhd u, v,w ] - [u , [\![x,y]\!] \rhd v, w] - [u,v,[\![ x,y]\!] \rhd w] \\
        & \quad + \Big( ([x,y] \cdot [\![ u,v]\!] ) \rhd w - [x,y, [\![ u,v]\!] \rhd w] \Big) - [\![ u,v]\!] \rhd [x,y,w]. \\
        & \quad + [ [x,y,u] , v,w] + [u, [x,y,v],w]  + [u,v,[x,y,w]] \\
        &= \{ \{x,y,u\} ,v,w\} + \{u, \{x,y,v\}, w\} + \{ u,v,\{x,y,w\} \}
    \end{split}
    \]
    where the terms in the last line sums up the terms in each column above it after rewriting the terms in parentheses by
    \[
    \begin{split}
         ([x,y] \cdot [\![ u,v]\!] ) \rhd w - [x,y, [\![ u,v]\!] \rhd w] &= [x,y, [\![ u,v]\!]  \rhd w] - [x,y,[\![ u,v]\!] ]\rhd w \\
         & \quad \quad \quad - [x,y, [\![ u,v]\!] \rhd w]\\
        &= -[\![ [x,y,u],v]] \rhd w - [\![ u, [x,y,v] ]\!] \rhd w.
    \end{split} 
    \]
\end{itemize}
\end{proof}

In relation to a reductive homogeneous space the bracket $[\![\noarg ,\noarg ]\!]$ in the PLY algebra and the binary operation $\circ$ in the Lie-Yamaguti algebra will both be defined as the negative of the torsion. One major difference is that the Lie-Yamaguti algebra associated to a reductive homogeneous space will be tensorial in both its operators, while the PLY algebra is only tensorial in the bracket $[\![ \noarg, \noarg]\!]$. 

\begin{remark}
    Consider a post-Lie algebra $(\mathcal{A}, [\noarg,\noarg], \rhd )$. By Remark \ref{rmrk1} $\mathcal{A}$ is also a PLY algebra, hence we can create a Lie-Yamaguti algebra as in Proposition \ref{prop:PLY_LY}. In this case we get $\{ \noarg,\noarg,\noarg \} = 0$ by the second post-Lie relation, and we are left with just the Lie algebra $(\mathcal{A}, [\noarg, \noarg])$.

    Similarly, if we consider a LAT algebra $(\mathcal{A} , \rhd)$ and interpret this as a PLY algebra, we can use Proposition \ref{prop:PLY_LY} again to create a Lie-Yamaguti algebra. In this case the binary operation will vanish and the ternary operation will be the triple-bracket $\{\noarg,\noarg,\noarg \} = -[\noarg,\noarg, \noarg ]$. We are left with only the Lie triple system $(\mathcal{A}, -[\noarg,\noarg,\noarg])$. 
\end{remark}

\section*{Acknowledgement}
This article is dedicated to Hans Munthe-Kaas and Brynjulf Owren in regards to their 60th birthdays. Prof. Munthe-Kaas has offered great guidance to the author over the last four years and none of the research presented in this paper would be possible without him. Prof. Owren has generously provided opportunities for the author to present his research, and the author wants to thank him for all his friendly encouragements. In addition, the author wants to thank Prof. Kurusch Ebrahimi-Fard who encouraged the author to write this article, and Adrien Laurent for his ever willingness to help the author with his research. Lastly, the author wants to thank the anonymous referees for their relentless efforts; their valuable feedback has been crucial for finalizing this product. The author is supported by the Research Council of Norway through project 302831 Computational Dynamics and Stochastics on Manifolds (CODYSMA).

\bibliographystyle{abbrv}

\end{document}